\title{A note on geometric theories of fields}
\author{Will Johnson and Jinhe Ye}
\email{willjohnson@fudan.edu.cn, jinhe.ye@maths.ox.ac.uk}
\date{\today}
\newcommand{\Zz}{\mathbb{Z}}
\newcommand{\Nn}{\mathbb{N}}
\newcommand{\Qq}{\mathbb{Q}}
\newcommand{\Mm}{\mathbb{M}}
\newcommand{\Kk}{\mathbb{K}}
\newcommand{\ba}{{\bar{a}}}
\newcommand{\bb}{{\bar{b}}}
\newcommand{\bc}{{\bar{c}}}
\newcommand{\bx}{{\bar{x}}}
\newcommand{\by}{{\bar{y}}}
\newcommand{\alg}{\mathrm{alg}}
\DeclareMathOperator{\Aut}{Aut}
\DeclareMathOperator{\Gal}{Gal}
\DeclareMathOperator{\tp}{tp}
\DeclareMathOperator{\acl}{acl}
\DeclareMathOperator{\dcl}{dcl}
\newcommand{\eq}{\mathrm{eq}}
\newcommand{\Th}{\mathrm{Th}}
\DeclareMathOperator*{\ind}{\raise0.2ex\hbox{\ooalign{\hidewidth$\vert$\hidewidth\cr\raise-0.9ex\hbox{$\smile$}}}}
\newtheorem{theorem}{Theorem}[section]
\newtheorem{lemma}[theorem]{Lemma} % the [theorem] means Lemmas and Theorems are numbered using the same counter
\newtheorem{corollary}[theorem]{Corollary} 
\newtheorem{proposition}[theorem]{Proposition}
\newtheorem{fact}[theorem]{Fact}
\theoremstyle{definition} % this makes the ones defined below here be formatted like "definitions" rather than "theorems".
\newtheorem{definition}[theorem]{Definition}
\newtheorem{remark}[theorem]{Remark}
\newtheorem{question}[theorem]{Question}
\theoremstyle{remark}
\newtheorem*{acknowledgment}{Acknowledgments}
\begin{document}
	
	\maketitle
	
	\begin{abstract}
		Let $T$ be a complete theory of fields, possibly with
                extra structure.  Suppose that model-theoretic
                algebraic closure agrees with field-theoretic
                algebraic closure, or more generally that
                model-theoretic algebraic closure has the exchange
                property.  Then $T$ has uniform finiteness, or
                equivalently, it eliminates the quantifier
                $\exists^\infty$.  It follows that very slim fields in
                the sense of Junker and Koenigsmann are the same thing
                as geometric fields in the sense of Hrushovski and
                Pillay.  Modulo some fine print, these two concepts
                are also equivalent to algebraically bounded fields in
                the sense of van den Dries.
                
                From the proof, one gets a one-cardinal theorem for geometric theories of fields: any infinite definable set has the same cardinality as the field.  We investigate whether this extends to interpretable sets.  We show that positive dimensional interpretable sets must have the same cardinality as the field, but zero-dimensional interpretable sets can have smaller cardinality.  As an application, we show that any geometric theory of fields has an uncountable model with only countably many finite algebraic extensions.
	\end{abstract}
	
	\section{Introduction}
Throughout the paper, $T$ denotes a complete theory. We use $\acl(-)$ to denote the model-theoretic algebraic closure. When $T$ expands the theory of fields, we use $(-)^\alg$ to denote the field-theoretic algebraic closure. Following Hrushovski, Pillay,
        and Gagelman~\cite{udi-anand-group-field,gagelman}, we say
        that $T$ is \emph{geometric} if (1)--(2) hold and
        \emph{pregeometric} if (1) holds:
        \begin{enumerate}
        \item $\acl(-)$ satisfies the exchange property.
        \item $T$ eliminates $\exists^\infty$, or equivalently, $T$
          has uniform finiteness.
        \end{enumerate}
	A \emph{(pre)geometric structure} is a structure $M$ whose
        complete theory is (pre)geometric.

        Using a simple argument, we show that pregeometric fields are
        geometric (Theorem~\ref{thm1}).  This seems to not be
        well-known.  For example, it is implicitly unknown in
        \cite[Remark~2.12]{udi-anand-group-field}, and a special case
        of this implication is asked as an open problem in
        \cite[Question~1]{JK-slim}.

        As a consequence of Theorem~\ref{thm1}, several concepts in
        the literature are equivalent, namely the \emph{very slim}
        fields of Junker and Koenigsmann
        \cite[Definition~1.1]{JK-slim}, the \emph{geometric fields} of
        Hrushovski and Pillay
        \cite[Remark~2.10]{udi-anand-group-field}, and (modulo some
        fine print) the \emph{algebraically bounded fields} of van den
        Dries~\cite{lou-dimension}.

        Our method also shows that in a geometric field, any infinite
        definable set has the same cardinality as the field
        (Proposition~\ref{card1}), which may be of independent interest.  It is natural to ask whether this extends to interpretable sets.  In Proposition~\ref{card23}, we show that if $X$ is an interpretable set of positive dimension, then $|X| = |K|$, but there are models where all the zero-dimensional interpretable sets satisfy $|X| < |K|$.  As an application, there is an uncountable model $K$ with only countably many finite algebraic extensions (Corollary~\ref{final}), which may be of interest to field theorists.  In the special case of $\omega$-free perfect PAC fields, this recovers examples such as~\cite[Example 2.2]{full-hilb}.

        \begin{acknowledgment}
		The first author was supported by the National Natural Science Foundation
		of China (Grant No.\@ 12101131). The second author was partially supported by GeoMod AAPG2019 (ANR-DFG), \textit{Geometric and
Combinatorial Configurations in Model Theory} and the National Science Foundation under Grant No. DMS-1928930 while the second author participated in a program hosted by the Mathematical Sciences Research Institute in Berkeley, California during Summer 2022. The second author would also like to thank Arno Fehm for interesting discussions.
	\end{acknowledgment}
                
        \section{Uniform finiteness from the exchange property}
        If $M$ is a geometric structure, there is a well-established
        dimension theory on $M$ defined as follows.  If $A\subseteq M$
        and $a$ is a tuple, we define $\dim(a/A)$ to be the length of
        the maximal $\acl_A$-independent subtuple of $a$.  This is
        well-defined by the exchange property.  If $X$ is an
        $A$-definable subset of $M^n$, we define $\dim(X) =
        \max\{\dim(x/A) : x \in X(\mathbb{U})\}$ for some monster model
        $\mathbb{U}\succeq M$.
        \begin{fact}\label{fact:dim}
          Let $M$ be a pregeometric structure, and $X, Y$ be definable sets.
          \begin{enumerate}
          \item $\dim(X)$ is well-defined, independent of the choice
            of $A$.
          \item \label{d2} $\dim(X) > 0 \iff |X| = \infty$.
          \item $\dim(X \times Y) = \dim(X) + \dim(Y)$.
          \item \label{d4} $\dim(M^n) = n$ unless $M$ is finite.
          \item If $f:X \to Y$ is a definable injection or
            surjection, then $\dim(X) \le \dim(Y)$ or $\dim(X) \ge
            \dim(Y)$, respectively.
          \end{enumerate}
        \end{fact}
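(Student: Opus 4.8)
The plan is to reduce everything to three ingredients: (i) the purely combinatorial consequences of the exchange property, (ii) a saturation observation about the monster model, and (iii) an ``extension'' lemma. For (i), I would invoke, exactly as in matroid theory, that all maximal $\acl_A$-independent subtuples of a finite tuple $a$ have the same length --- so $\dim(a/A)$ is well-defined and is at most $|a|$ --- together with monotonicity $\dim(a/B) \le \dim(a/A)$ for $A \subseteq B$, the fact that $a \subseteq \acl(Aa_0)$ whenever $a_0$ is a maximal $\acl_A$-independent subtuple of $a$, and the additivity formula $\dim(ab/A) = \dim(a/A) + \dim(b/Aa)$; these I take as standard. For (ii): if $D$ is an infinite set definable over a small set $A$ in $\mathbb{U}$, then $D(\mathbb{U})$ has cardinality $|\mathbb{U}|$, since $\{x \in D\} \cup \{x \ne d : d \in S\}$ is finitely satisfiable for every small $S \subseteq D(\mathbb{U})$; in particular $D(\mathbb{U}) \not\subseteq \acl(A)$. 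Conversely a finite $A$-definable set is contained in $\acl(A)$, and the first remark applies equally to the set of realizations of an infinite type over a small set.

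The crux is the extension lemma: for small sets $A \subseteq B$ and a finite tuple $a$, there is $a' \equiv_A a$ with $\dim(a'/B) = \dim(a'/A) = \dim(a/A)$. I would prove it by letting $a_0$ be a maximal $\acl_A$-independent subtuple of $a$, of length $k = \dim(a/A)$, and building $a_0' = (c_1',\dots,c_k')$ coordinate by coordinate: at step $j$, use (ii) to realize $\tp(c_{j}/A c_1'\cdots c_{j-1}')$ --- which is infinite because the corresponding pushforward of $c_j$ is non-algebraic over $A c_1'\cdots c_{j-1}'$ --- by some $c_j'$ lying outside $\acl(B c_1' \cdots c_{j-1}')$, composing afterwards with an automorphism over $A$ so that $(c_1',\dots,c_j') \equiv_A (c_1,\dots,c_j)$. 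Then $a_0' \equiv_A a_0$ is $\acl_B$-independent, so $\dim(a_0'/B) = k$; applying an automorphism over $A$ carrying $a_0$ to $a_0'$ and letting $a'$ be the image of $a$ gives $a' \equiv_A a$ with $a' \subseteq \acl(A a_0')$, whence $k = \dim(a_0'/B) \le \dim(a'/B) \le \dim(a'/A) = k$.

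Granting (i)--(iii), the five items are quick. For (1): if $X = \phi(\mathbb{U},\ba) = \psi(\mathbb{U},\bb)$, the extension lemma (over $\ba$, then over $\ba\bb$) lets one replace a point of $X$ realizing $\max_{x\in X}\dim(x/\ba)$ by an $\ba$-conjugate, still in $X$, on which $\dim(-/\ba\bb) = \dim(-/\ba)$; hence $\max_{x \in X}\dim(x/\ba) = \max_{x \in X}\dim(x/\ba\bb)$, and by symmetry this also equals $\max_{x\in X}\dim(x/\bb)$, so the two candidate values of $\dim(X)$ coincide. For (2): if $X$ is infinite some point of $X(\mathbb{U})$ has a coordinate outside $\acl(A)$ by (ii), so $\dim(X)\ge 1$; conversely a point witnessing $\dim(X) \ge 1$ has a non-algebraic coordinate, hence infinitely many $A$-conjugates inside $X$. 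For (3): ``$\le$'' is additivity together with $\dim(b/Aa)\le\dim(b/A)$; ``$\ge$'' uses the extension lemma to pick the maximal-dimension point of $Y$ independent over $A$ from the chosen maximal point of $X$. For (4): for infinite $M$ combine $\dim(M)\ge 1$ from (2), the trivial bound $\dim(M)\le 1$, and (3). For (5): for a definable injection $f\colon X\to Y$, a maximal $x\in X$ has $f(x)=y$ with $x\in\dcl(Ay)$, so $\dim(X)=\dim(x/A)\le\dim(xy/A)=\dim(y/A)+0\le\dim(Y)$; for a surjection, take a maximal $y \in Y$, pull it back along $f$ (surjectivity passes to $\mathbb{U}$), and use $y = f(x)\in\dcl(Ax)$. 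I expect the extension lemma to be the only step needing genuine work, and within it the only real point is realizing the successive coordinate types off the relevant algebraic closures, which is exactly where the saturation observation (ii) is used.
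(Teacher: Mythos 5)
The paper states this as a \emph{Fact} and gives no proof of its own (it is standard material on the $\acl$-pregeometry, implicitly deferred to Hrushovski--Pillay and Gagelman), so there is nothing to compare against; your write-up is a correct and essentially complete rendition of the standard argument. The decomposition is the right one: everything does reduce to the matroid identities for $\dim(-/A)$, the saturation fact that a non-algebraic type (or infinite definable set) over a small set has more realizations than any small algebraic closure can contain, and the extension lemma, which is indeed the only step requiring work. Two small points worth tightening: the claim $|D(\mathbb{U})|=|\mathbb{U}|$ needs the monster to be saturated in its own cardinality --- all you actually use is that $D(\mathbb{U})$ exceeds the cardinality of $\acl$ of a small set, so it is cleaner to state only that; and in item (2), the step ``some point of $X(\mathbb{U})$ has a coordinate outside $\acl(A)$'' should be routed through a coordinate projection (if $X\subseteq M^n$ is infinite then some projection $\pi_i(X)\subseteq M$ is infinite, and one applies the saturation observation to that), since $X(\mathbb{U})\not\subseteq\acl(A)^n$ is what the cardinality count directly gives. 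Neither point is a gap.
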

        Gagelman observed that the dimension theory can also be
        extended to $M^\eq$.  If $a \in M^\eq$, then $\dim(a/A)$ is
        defined to be $\dim(b/Aa) - \dim(b/A)$ for any tuple $b$ with
        $a \in \acl^\eq(Ab)$.  If $X$ is an interpretable set, then
        $\dim(X)$ is defined as for definable sets.  By
        \cite[Lemma 3.3]{gagelman}, these definitions are well-defined, and
        all of Fact~\ref{fact:dim} holds \emph{except for} (\ref{d4})
        and the $\Leftarrow$ direction of (\ref{d2})
        \cite[Proposition 3.4 and p. 321]{gagelman}.

        \begin{lemma} \label{perfect}
          Let $K$ be a pregeometric field.  Then $K$ is perfect.
        \end{lemma}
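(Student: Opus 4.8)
The plan is to reduce to positive characteristic and then extract a numerical contradiction from the dimension theory of Fact~\ref{fact:dim}. In characteristic $0$ there is nothing to prove, and a finite field is perfect, so I may assume $\operatorname{char}(K) = p > 0$ and $K$ infinite. Suppose toward a contradiction that $K$ is not perfect, and fix $a \in K \setminus K^p$.

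First I would record that $K^p = \{x : \exists y\ y^p = x\}$ is a definable subfield of $K$ and that the Frobenius $x \mapsto x^p$ is a definable bijection $K \to K^p$; since $\dim(K) = 1$ by Fact~\ref{fact:dim}(\ref{d4}) (as $K$ is infinite), invariance of dimension under definable bijections gives $\dim(K^p) = 1$. Next, consider the subfield $F := K^p(a) = K^p[a] = \{\sum_{i=0}^{p-1} z_i a^i : z_i \in K^p\}$, which is definable over $a$. Because $a \notin K^p$, the element $a$ is purely inseparable of degree exactly $p$ over $K^p$, its minimal polynomial being $X^p - a^p$; hence $1, a, \dots, a^{p-1}$ is a $K^p$-basis of $F$, and therefore the $a$-definable map $(K^p)^p \to F$, $(z_0, \dots, z_{p-1}) \mapsto \sum_{i} z_i a^i$, is a \emph{bijection}. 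Applying the product formula for $\dim$ and again invariance under definable bijections, $\dim(F) = p \cdot \dim(K^p) = p$.

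Finally, since $F \subseteq K$ we get $\dim(F) \le \dim(K) = 1$, contradicting $\dim(F) = p \ge 2$; so $K$ is perfect. Everything here is routine modulo Fact~\ref{fact:dim}; the one point to be careful about is that the coordinatization $(K^p)^p \to F$ must be recognized as a bijection and not merely a surjection, since a mere surjection would only yield $\dim(F) \le p$ and the contradiction would be lost.
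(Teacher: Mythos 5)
Your proof is correct and is essentially the paper's argument: the paper also takes $a \in K \setminus K^p$ and uses the resulting $K^p$-linear independence of powers of $a$ to produce a definable injection of a set of dimension $\ge 2$ into $K$ (namely $(x,y) \mapsto x^p + ay^p$ from $K^2$), contradicting $\dim(K)=1$. Your detour through $\dim(K^p)=1$ and the full basis $1,a,\dots,a^{p-1}$ is fine but unnecessary; two basis elements already suffice, and an injection (rather than a bijection) is all that is needed since Fact~\ref{fact:dim}(5) then gives $\dim(K^2) \le \dim(K)$.
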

        \begin{proof}
          Otherwise, $K$ is an infinite field of characteristic $p$.
          Take $a \in K \setminus K^p$.  Then the map
          \begin{equation*}
            f(x,y) \mapsto x^p + ay^p
          \end{equation*}
          is a definable injection $f : K \times K \to K$.  But
          $\dim(K^2) = 2 > \dim(K) = 1$, as $K$ is infinite.
        \end{proof}
        \begin{lemma} \label{trick}
          Let $K$ be a field and $X$ be a subset.  Then one of the
          following holds:
          \begin{enumerate}
          \item \label{t1} The set $S = \{(y - y')/(x' - x) : x,y,x',y' \in X, ~
            x \ne x'\}$ equals $K$.
          \item \label{t2} There is an injection $f : X^2 \to K$ of the form
            $f(x,y) = ax + y$.
          \end{enumerate}
        \end{lemma}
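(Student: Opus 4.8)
The plan is to show directly that if alternative (\ref{t1}) fails then alternative (\ref{t2}) holds, by choosing the coefficient $a$ explicitly. The key observation is that the set $S$ is exactly the set of ``bad'' slopes: for a fixed $a \in K$, the affine map $f(x,y) = ax + y$ fails to be injective on $X^2$ \emph{if and only if} $a \in S$.

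To verify this equivalence, suppose $f(x,y) = f(x',y')$ for some $(x,y) \ne (x',y')$ in $X^2$, i.e.\ $ax + y = ax' + y'$, equivalently $a(x' - x) = y - y'$. If $x = x'$ then $y = y'$, contradicting $(x,y) \ne (x',y')$; hence $x \ne x'$ and $a = (y - y')/(x' - x) \in S$. Conversely, if $a = (y - y')/(x' - x)$ with $x,y,x',y' \in X$ and $x \ne x'$, then $ax + y = ax' + y'$ while $(x,y) \ne (x',y')$ (as $x \ne x'$), so $f$ is not injective on $X^2$.

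Now, if (\ref{t1}) fails then $S \subsetneq K$, so we may pick $a \in K \setminus S$; by the equivalence just established, $f(x,y) = ax + y$ is an injection $X^2 \to K$, which is exactly (\ref{t2}). I don't expect any genuine obstacle here — the one thing to keep an eye on is the degenerate case $|X| \le 1$, but then $|X^2| \le 1$ and (\ref{t2}) holds trivially (and indeed $S = \emptyset \ne K$ in that case). So the whole argument is a short computation once one notices that the two alternatives are essentially complementary.
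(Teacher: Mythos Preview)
Your proof is correct and follows exactly the same approach as the paper: pick $a \in K \setminus S$ and note that $f(x,y) = ax + y$ is then injective on $X^2$. The paper's proof is a one-liner stating just this; your version simply adds the verification of injectivity and the (unnecessary but harmless) converse observation and edge case.
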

        \begin{proof}
          If $a \in K \setminus S$, then the function $f(x,y) = ax+y$
          is injective on $X^2$.
        \end{proof}
        \begin{lemma} \label{trick2}
          Let $K$ be an infinite pregeometric field, and
          let $X \subseteq K$ be definable.  Then $X$ is infinite if
          and only if $K = \{(y-y')/(x'-x) : x,y,x',y' \in X, ~ x \ne
          x'\}$.
        \end{lemma}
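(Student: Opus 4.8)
The plan is to derive this quickly from Lemma~\ref{trick} together with the dimension calculus of Fact~\ref{fact:dim}. Throughout, write $S = \{(y-y')/(x'-x) : x,y,x',y' \in X,~ x \ne x'\}$, so the claim is that, for definable $X$, we have $X$ infinite $\iff$ $S = K$.

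For the easy direction I would prove the contrapositive of ``$K = S \Rightarrow X$ infinite''. If $X$ is finite, then $S$ is the image of the finite set $\{(x,y,x',y') \in X^4 : x \ne x'\}$ under a fixed map, hence finite; since $K$ is infinite, this gives $S \ne K$. Thus $K = S$ forces $X$ to be infinite. (Definability of $X$ is not even needed here.)

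For the main direction, suppose $X$ is infinite and apply Lemma~\ref{trick}. In case~(\ref{t1}) we have $S = K$ and are done, so suppose instead we land in case~(\ref{t2}): there is an injection $f : X^2 \to K$ of the form $f(x,y) = ax + y$ with $a \in K$. This $f$ is definable (over the parameter $a$), so Fact~\ref{fact:dim}(5) yields $\dim(X^2) \le \dim(K)$. On the other hand, $X$ infinite gives $\dim(X) \ge 1$ by Fact~\ref{fact:dim}(\ref{d2}), hence $\dim(X^2) = 2\dim(X) \ge 2$ by additivity of $\dim$, whereas $\dim(K) = 1$ by Fact~\ref{fact:dim}(\ref{d4}) since $K$ is infinite. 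This contradiction rules out case~(\ref{t2}), so $S = K$.

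I do not anticipate a genuine obstacle: essentially all the content is packaged into Lemma~\ref{trick} and the standard behaviour of $\dim$. The one point worth a moment's care is that $\dim$ is independent of the choice of parameters (Fact~\ref{fact:dim}(1)), which is what legitimizes introducing the parameter $a$ when invoking $\dim(X^2) \le \dim(K)$.
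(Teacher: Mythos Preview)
Your proof is correct and follows essentially the same route as the paper's: both directions use Lemma~\ref{trick}, ruling out case~(\ref{t2}) via the dimension inequality $\dim(X^2) \ge 2 > 1 = \dim(K)$ when $X$ is infinite, and ruling out case~(\ref{t1}) by finiteness of $S$ when $X$ is finite. Your extra remarks about parameter-independence of $\dim$ and definability of $f$ are appropriate clarifications but do not change the argument.
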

         
        \begin{proof}
          If $X$ is infinite, then $\dim(X) > 0$, so $\dim(X^2) \ge 2
          > \dim(K) = 1$ and there are no definable injections $f : X^2
          \to K$.  Therefore case (\ref{t2}) of Lemma~\ref{trick}
          cannot hold, so (\ref{t1}) holds.  Conversely, if $X$ is
          finite, then case (\ref{t1}) of Lemma~\ref{trick} cannot
          hold, as the set $S$ would be finite.
        \end{proof}
	\begin{theorem} \label{thm1}
		Let $T$ be a complete theory of fields, possibly with
                extra structure.  If $T$ is pregeometric, then $T$ is
                geometric.
	\end{theorem}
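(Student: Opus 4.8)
The plan is to extract elimination of $\exists^\infty$ from Lemma~\ref{trick2}, which says precisely that for a definable $X \subseteq K$ the property ``$X$ is infinite'' is captured by a first-order condition on the defining parameters --- and the real content is that this happens uniformly across all models of $T$. Since a complete theory with a finite model trivially eliminates $\exists^\infty$, I would first reduce to the case that every $M \models T$ is infinite, hence an infinite pregeometric field, so that Lemma~\ref{trick2} applies inside each such $M$. Next I would invoke the standard reduction that to prove $T$ eliminates $\exists^\infty$ it suffices to eliminate $\exists^\infty x$ for formulas $\varphi(x;\bar y)$ with $x$ a single variable: given $\varphi(x_1,\dots,x_n;\bar y)$, one argues by induction on $n$ using that $\varphi(M;\bar b) \subseteq M^n$ is infinite iff its image under the projection to the first coordinate is infinite or some fiber over the first coordinate is infinite, noting that the single-variable bound for the projection and the inductive bound for the fibers multiply to a uniform bound on $|\varphi(M;\bar b)|$ when it is finite.

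Now fix $\varphi(x;\bar y)$ with $x$ a single variable and let $\psi(\bar y)$ be the formula
\[
 \forall z\,\exists x\,\exists x'\,\exists y\,\exists y'\ \bigl(\varphi(x;\bar y)\wedge\varphi(x';\bar y)\wedge\varphi(y;\bar y)\wedge\varphi(y';\bar y)\wedge x\ne x'\wedge y-y'=z(x'-x)\bigr),
\]
asserting that the set of difference-quotients of $\varphi(\cdot;\bar y)$ is all of $K$. By Lemma~\ref{trick2}, for every $M\models T$ and every $\bar b$ in $M$ we have $M\models\psi(\bar b)$ if and only if $\varphi(M;\bar b)$ is infinite; in particular $T\vdash\psi(\bar y)\to\exists^{\ge n}x\,\varphi(x;\bar y)$ for every $n\in\Nn$. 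It remains only to find one $N$ with $T\vdash\exists^{\ge N}x\,\varphi(x;\bar y)\to\psi(\bar y)$; granting this, $\psi(\bar y)$ is $T$-equivalent to $\exists^{\ge N}x\,\varphi(x;\bar y)$, and since $\psi$ defines ``$\varphi(\cdot;\bar y)$ is infinite'', this is exactly elimination of $\exists^\infty$ for $\varphi$.

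To obtain $N$ I would argue by compactness: if no such $N$ existed, then $T\cup\{\neg\psi(\bar c)\}\cup\{\exists^{\ge n}x\,\varphi(x;\bar c):n\in\Nn\}$, with $\bar c$ a tuple of new constants, would be finitely satisfiable and hence satisfiable, producing $M\models T$ and $\bar b\in M$ with $\varphi(M;\bar b)$ infinite yet $M\models\neg\psi(\bar b)$, contradicting Lemma~\ref{trick2}. I do not anticipate a genuine obstacle: the geometric substance has all been front-loaded into Lemma~\ref{trick2} via the inequality $\dim(X^2)\ge 2 > 1 = \dim K$ for infinite $X$, and everything above is bookkeeping. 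The only step needing a bit of care is the reduction to a single variable, where one must check that the bounds coming from the projection and from the fibers really do combine into a single uniform bound for $\varphi$.
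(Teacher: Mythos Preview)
Your proposal is correct and follows essentially the same route as the paper: reduce to infinite models, and use Lemma~\ref{trick2} to give a first-order formula $\psi(\bar y)$ expressing ``$\varphi(M;\bar y)$ is infinite.''  You spell out two details the paper leaves implicit---the reduction to a single free variable and the compactness extraction of a uniform bound $N$---but note that the compactness step is not strictly needed, since the existence of $\psi$ already \emph{is} elimination of $\exists^\infty$ for $\varphi$ (the uniform bound then follows automatically).
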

	\begin{proof}
		It suffices to eliminate $\exists^\infty$.  If 
                models of $T$ are finite, then $\exists^\infty$ is
                trivially eliminated.  If the models of $T$ are
                infinite, then Lemma~\ref{trick2} gives a first-order criterion for telling whether a definable set $X \subseteq K$ is infinite.
        \end{proof}
        The proof of Theorem~\ref{thm1} is the same argument used in~\cite[Obeservation 3.1]{will-canonical}.
       
	\begin{remark}
		Let $(K,+,\cdot,\ldots)$ be a field, possibly with extra structure.
		If $\acl(-)$ agrees with field-theoretic algebraic closure, then
		$\acl(-)$ satisfies exchange. 
	\end{remark}
        \begin{definition}[{Junker-Koenigsmann~\cite[Definition~1.1]{JK-slim}}]
          A field $K$ is \emph{very slim} if $\acl(-)$ agrees with
          field-theoretic algebraic closure in any elementary
          extension of $K$.
        \end{definition}
	\begin{definition}[{Hrushovski-Pillay~\cite[Remark~2.10]{udi-anand-group-field}}]
		A \emph{strongly geometric field} is a perfect field
                $(K,+,\cdot)$ with a geometric theory that is very
                slim.
        \end{definition}
        Hrushovski and Pillay call these \emph{geometric fields}, but
        we prefer the term \emph{strongly geometric} to distinguish
        strongly geometric fields from the more general case of fields
        that are geometric (as structures).  There are geometric
        fields that are not strongly geometric (Remark~\ref{cxf}).
        \begin{corollary}
          A field $K$ is very slim if and only if it is strongly
          geometric.
        \end{corollary}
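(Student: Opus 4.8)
The plan is to notice that one direction is purely definitional and the other is an immediate consequence of Theorem~\ref{thm1} together with Lemma~\ref{perfect}. Indeed, a strongly geometric field is by definition a perfect field whose theory is geometric and which is very slim, so a strongly geometric field is trivially very slim; this settles the ``if'' direction with no work.

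For the converse, suppose $K$ is very slim. First I would unwind the definition: $\acl(-)$ agrees with field-theoretic algebraic closure $(-)^\alg$ in every elementary extension of $K$, and in particular in a monster model $\mathbb{U} \succeq K$. Since $(-)^\alg$ is the closure operator of a pregeometry and hence satisfies the exchange property (as recorded in the Remark above), it follows that $\acl(-)$ satisfies exchange in $\mathbb{U}$, i.e.\ that $T := \Th(K)$ is pregeometric. Now apply Theorem~\ref{thm1} to conclude that $T$ is in fact geometric, and apply Lemma~\ref{perfect} to conclude that $K$, being a pregeometric field, is perfect. Putting these together, $K$ is a perfect field with a geometric theory that is very slim, which is precisely the definition of a strongly geometric field.

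The only point requiring any care is the bookkeeping between properties of the field $K$ and properties of its theory $T$: ``very slim'' and ``perfect'' are conditions on $K$ (and its elementary extensions), while ``pregeometric'' and ``geometric'' are conditions on $T$. The step to watch is that very slimness is, by fiat, inherited by all elementary extensions of $K$, so the exchange property for $\acl(-)$ holds in every model of $T$ and not merely in $K$ itself; this is exactly what is needed to legitimately invoke Theorem~\ref{thm1}. Beyond this observation I do not anticipate a genuine obstacle, and the whole argument should take only a few lines.
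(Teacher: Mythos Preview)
Your proposal is correct and follows essentially the same approach as the paper: the ``if'' direction is definitional, and for the converse you observe that very slimness gives the exchange property for $\acl$, invoke Theorem~\ref{thm1} to get geometricity, and invoke Lemma~\ref{perfect} to get perfectness. The paper's proof is terser but identical in substance.
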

        \begin{proof}
          If $K$ is very slim, then $K$ is pregeometric, hence
          geometric by Theorem~\ref{thm1} and perfect by
          Lemma~\ref{perfect} (or \cite[Proposition 4.1]{JK-slim}).
        \end{proof}
	\begin{definition}[van den Dries~\cite{lou-dimension}] \label{ab}
		Let $(K,+,\cdot,\ldots)$ be an expansion of a field, and $F$ be a
		subfield.  Then $K$ is \emph{algebraically bounded over $F$} if for
		any formula $\varphi(\bx,y)$, there are finitely many polynomials
		$P_1,\ldots,P_m \in F[\bx,y]$ such that for any $\ba$, \emph{if}
		$\varphi(\ba,K)$ is finite, then $\varphi(\ba,K)$ is contained in
		the zero set of $P_i(\ba,y)$ for some $i$ such that $P_i(\ba,y)$ does not vanish.  Following the convention in \cite{lou-dimension,JK-slim} we say that $K$
		is \emph{algebraically bounded} if it is algebraically bounded over
		$K$.
	\end{definition}
	
	\begin{lemma} \label{ablem}
		Suppose $K$ is algebraically bounded over some subfield $F$.
		\begin{enumerate}
			\item If $A$ is a subset of $K$, then $\acl(AF)$ is the field-theoretic relative
			algebraic closure $(AF)^\alg \cap K$.
			\item $K$ has uniform finiteness.
			\item If $K^*$ is an elementary extension of $K$, then $K^*$ is
			algebraically bounded over $F$.
		\end{enumerate}
	\end{lemma}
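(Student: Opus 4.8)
The plan is to prove~(1) and~(2) directly from the definition of algebraic boundedness, and then deduce~(3) from~(2) by an elementarity argument.

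For~(1), the containment $(AF)^\alg \cap K \subseteq \acl(AF)$ holds in any field with extra structure: given $b \in (AF)^\alg \cap K$, clear denominators in a minimal polynomial of $b$ over the field $AF$ to get a nonzero polynomial $Q(y)$ whose coefficients lie in the subring of $K$ generated by $A \cup F$; then $b$ belongs to the finite, $AF$-definable zero set of $Q$, so $b \in \acl(AF)$. For the converse, take $b \in \acl(AF)$ witnessed by a formula $\varphi(\ba, y)$ with $\ba$ from $A \cup F$, $\varphi(\ba, K)$ finite and $b \in \varphi(\ba, K)$. Algebraic boundedness over $F$ gives a polynomial $P_i \in F[\bx, y]$ with $P_i(\ba, y)$ not identically $0$ and $\varphi(\ba, K)$ inside its zero set; since $P_i$ has coefficients in $F$ and $\ba$ is from $A \cup F$, the coefficients of $P_i(\ba, y)$ lie in $AF$, so $b$ is field-algebraic over $AF$, hence $b \in (AF)^\alg \cap K$.

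For~(2), fix $\varphi(\bx, y)$, let $P_1, \dots, P_m \in F[\bx, y]$ be as in the definition, and set $N := \max_i \deg_y P_i$. If $\varphi(\ba, K)$ is finite, it lies in the zero set of some $P_i(\ba, y)$ which is a nonzero polynomial of degree at most $N$ in $y$, so $|\varphi(\ba, K)| \le N$. Thus every finite fiber of $\varphi$ has size bounded by the fixed number $N$, which is exactly elimination of $\exists^\infty$ for $\varphi$; since $\varphi$ was arbitrary, $K$ has uniform finiteness.

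For~(3), keep $\varphi(\bx, y)$, $P_1, \dots, P_m$, $N$ as above and write $P_i(\bx, y) = \sum_{j \le N} c_{i,j}(\bx)\, y^j$ with $c_{i,j} \in F[\bx]$. The point --- and the only real obstacle --- is that ``$\varphi(\ba, K)$ is finite'' is not first-order, but part~(2) lets us replace it by the first-order condition ``$\varphi(\ba, K)$ has at most $N$ elements''. So consider the sentence, with parameters in $F \subseteq K$, saying that for every $\bx$, if $\varphi(\bx, y)$ has at most $N$ solutions $y$ then there is $i$ with $c_{i,j}(\bx) \ne 0$ for some $j$ and $\forall y\,(\varphi(\bx, y) \to P_i(\bx, y) = 0)$. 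This sentence holds in $K$: a fiber of size $\le N$ is in particular finite, and algebraic boundedness over $F$ supplies the conclusion. It therefore holds in $K^* \succeq K$. Likewise, by part~(2) the first-order scheme expressing ``every fiber of $\varphi$ with more than $N$ elements is infinite'' holds in $K$, hence in $K^*$. Now if $\ba$ is a tuple from $K^*$ with $\varphi(\ba, K^*)$ finite, the transferred scheme forces $|\varphi(\ba, K^*)| \le N$, and then the transferred sentence puts $\varphi(\ba, K^*)$ in the zero set of some $P_i(\ba, y)$ that is not identically $0$. Hence the same polynomials $P_1, \dots, P_m$ witness algebraic boundedness of $K^*$ over $F$; once the conversion of finiteness into a uniform bound from part~(2) is in hand, everything else is routine bookkeeping.
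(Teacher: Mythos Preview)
Your proof is correct and follows essentially the same route as the paper's: parts~(1) and~(2) are argued directly from the definition, and for~(3) you use the uniform bound from~(2) to turn the algebraic boundedness condition into a first-order sentence over $F$ that transfers to $K^*$. The paper is terser in~(3), simply noting that elimination of $\exists^\infty$ makes the condition first-order, while you spell out the sentences explicitly; the content is the same.
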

	
	\begin{proof}
		\begin{enumerate}
			\item Clearly $(AF)^\alg \cap K \subseteq \acl(AF)$.  Conversely if
			$b \in \acl(AF)$, then $b$ is in a finite set $\varphi(\ba,K)$ for
			some tuple $\ba$ from $AF$.  By algebraic boundedness,
			$\varphi(\ba,K)$ is contained in a finite zero set of some
			polynomial $P(\ba,y)$, where $P(\bx,y) \in F[\bx,y]$.  Therefore
			$b$ is field-theoretically algebraic over $AF$.
			\item For a fixed formula $\varphi(\bx,y)$, let $P_1,\ldots,P_m \in
			F[\bx,y]$ be polynomials as in Definition~\ref{ab}.  Then the
			cardinality of finite sets of the form $\varphi(\ba,K)$ is bounded
			by the maximum of the degrees of the $P_i$'s.
			\item By (2), the theory of $K$ has uniform finiteness, and so
			$\exists^\infty$ is uniformly eliminated across elementary
			extension of $K$.  It follows that for fixed $\varphi,
			P_1,\ldots,P_m$, the following condition is perserved in
			elementary extensions:
			\begin{quote}
				For any $\ba$, if $\varphi(\ba,y)$ defines a finite set, then
				there is $i \in \{1,\ldots,m\}$ such that $P_i(\ba,y)$ has
				finitely many zeros and $\varphi(\ba,y) \rightarrow P_i(\ba,y) =
				0$.
			\end{quote}
			Therefore algebraic boundedness transfers from $K$ to any
			elementary extension $K^*$. \qedhere
		\end{enumerate}
	\end{proof}
	\begin{lemma} \label{2.12}
		Let $K = (K,+,\cdot,\ldots)$ be a field or an expansion of a field.
		Let $F$ be a subfield.  The following are equivalent:
		\begin{enumerate}
			\item In any elementary extension $K^* \succeq K$, field-theoretic
			algebraic closure over $F$ agrees with model-theoretic algebraic
			closure over $F$: if $A \subseteq K^*$ and $b \in \acl(AF)$, then
			$b \in (AF)^\alg$.
			\item Condition (1) holds and $K$ has uniform finiteness, or equivalently,
			$\exists^\infty$ is eliminated in elementary extensions of $K$.
			\item $K$ is algebraically bounded over $F$.
		\end{enumerate}
	\end{lemma}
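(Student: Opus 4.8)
The plan is to prove the cycle $(1) \Rightarrow (3) \Rightarrow (2) \Rightarrow (1)$. The last two implications are quick. For $(2) \Rightarrow (1)$ there is nothing to do, since condition~(1) is literally the first half of condition~(2). For $(3) \Rightarrow (2)$ we invoke Lemma~\ref{ablem}: part~(2) of that lemma gives uniform finiteness, and part~(3) says every elementary extension $K^* \succeq K$ is again algebraically bounded over $F$, so part~(1) applied to $K^*$ yields $\acl(AF) = (AF)^\alg \cap K^* \subseteq (AF)^\alg$ for all $A \subseteq K^*$, which is exactly condition~(1). Hence all the work is in $(1) \Rightarrow (3)$, which I would carry out in two steps: first upgrade~(1) to~(2) by establishing uniform finiteness, then deduce~(3) from~(2).

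\emph{Step 1: $(1) \Rightarrow (2)$.} Add a new constant symbol naming each element of $F$, forming a language $L'$ and a complete theory $T' = \Th(K, +, \cdot, \ldots, (a)_{a \in F})$ of a field with extra structure. In $T'$ one has $F \subseteq \dcl(\emptyset)$, so for a set $A$ in a model of $T'$ we get $\acl^{T'}(A) = \acl^T(A \cup F)$. Passing to a sufficiently saturated model of $T'$, which (forgetting the constants) we may take to be an $L$-elementary extension of $K$, condition~(1) identifies $\acl^{T'}(A)$ with the relative field-theoretic algebraic closure of $A$ over $F$. That closure operator has the exchange property --- this is the classical fact that algebraic dependence over a field is a matroid --- so $T'$ is pregeometric, hence geometric by Theorem~\ref{thm1}, and in particular $T'$ has uniform finiteness. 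This descends to $T$: checking in a monster model $\mathbb{U} \models T$, which being a saturated model of $\Th(K)$ contains $K$ and hence $F$, we may expand $\mathbb{U}$ to a model of $T'$, and a uniform bound on the sizes of finite fibers of an $L$-formula $\varphi(\bx, y)$ in $T'$ is such a bound in $\mathbb{U}$. So $T$ has uniform finiteness, and together with~(1) this gives~(2).

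\emph{Step 2: $(2) \Rightarrow (3)$.} Fix an $L$-formula $\varphi(\bx, y)$; by uniform finiteness there is a formula $\psi_\varphi(\bx)$ equivalent to ``$\varphi(\bx, y)$ is finite''. Working over $F$ in a monster model $\mathbb{U} \succeq K$, consider the partial type
\[
  \Sigma(\bx, y) = \{\varphi(\bx, y),\ \psi_\varphi(\bx)\} \cup \{\chi_P(\bx, y) : P \in F[\bx, y]\},
\]
where, for $P = \sum_j P_j(\bx)\, y^j$, the formula $\chi_P$ asserts $\bigwedge_j P_j(\bx) = 0$ or $P(\bx, y) \neq 0$; realizing all the $\chi_P$ says exactly that $y$ is transcendental over $F(\bx)$. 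A realization $(\ba, b)$ of $\Sigma$ would have $b \in \varphi(\ba, \mathbb{U})$ with this set finite, hence $b \in \acl(\ba F)$, hence $b \in (\ba F)^\alg$ by condition~(1) applied in $\mathbb{U}$, contradicting transcendence of $b$ over $F(\ba)$. So $\Sigma$ is inconsistent, and by compactness there are finitely many $P_1, \ldots, P_m \in F[\bx, y]$ such that whenever $\varphi(\ba, y)$ is finite and $b \in \varphi(\ba, \mathbb{U})$, some $P_i(\ba, y)$ is not identically zero with $P_i(\ba, b) = 0$. To replace these by a single polynomial for each $\ba$, pass to the finite family $\{\prod_{i \in J} P_i : J \subseteq \{1, \ldots, m\}\}$: for $\ba$ with $\varphi(\ba, y)$ finite, the product over $J = \{i : P_i(\ba, y) \not\equiv 0\}$ is not identically zero and its zero set in $y$ contains $\varphi(\ba, K)$. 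This is exactly algebraic boundedness over $F$.

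The step I expect to be the main obstacle is Step~1. The tempting shortcut --- that condition~(1) makes $T$ itself pregeometric, so Theorem~\ref{thm1} applies directly --- is wrong, because the algebraic closure of a small subset of $K$ need not be field-theoretically closed; only the algebraic closure of sets containing $F$ is controlled. The remedy is to absorb $F$ into the language, apply Theorem~\ref{thm1} to the genuinely pregeometric theory $T'$, and observe that uniform finiteness --- a statement purely about cardinalities of finite definable fibers --- passes back down from $T'$ to $T$. Once uniform finiteness is available, Step~2 is a routine compactness argument; its only delicate points are the transcendence-degree partial type (which is why $\psi_\varphi$, hence Step~1, is needed) and the product-over-subsets device used to consolidate the finitely many polynomials into one.
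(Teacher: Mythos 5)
Your proof is correct and takes essentially the same route as the paper's: $(1)\Rightarrow(2)$ by naming $F$, noting that field-theoretic algebraic closure has exchange, and invoking Theorem~\ref{thm1}; $(2)\Rightarrow(3)$ by compactness; and $(3)\Rightarrow(1),(2)$ via Lemma~\ref{ablem}. The only differences are presentational: you order the cycle as $(1)\Rightarrow(3)\Rightarrow(2)\Rightarrow(1)$, and you run the compactness step as an inconsistent partial type with an explicit product-over-subsets consolidation, where the paper argues by contradiction with a bad tuple in an elementary extension and combines the polynomials implicitly.
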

	\begin{proof}
		(1)$\implies$(2).  Field-theoretic algebraic closure satisfies the
		exchange property.  Therefore $\acl(-)$ satisfies exchange (after
		naming the elements of $F$ as parameters).  By Theorem~\ref{thm1},
		elementary extensions of $K$ eliminate $\exists^\infty$.
		
		(2)$\implies$(3).  Suppose (2) holds but $K$ fails to be
		algebraically bounded over $F$, witnessed by some formula
		$\varphi(\bx,y)$. Using elimination of $\exists^\infty$, we may assume that there is $n\in \mathbb{N}$ such that $|\varphi(\ba,K)|\leq n$ for every $\ba$. For any finite set of polynomials
		$P_1,\ldots,P_m \in F[\bx,y]$, there is $\ba\in K$ such that
		$\varphi(\ba,K)$ is finite, but is not contained in the zero set of $P_i(\ba,y)$ unless $P_i(\ba,y) \equiv 0$. By compactness, there is an elementary extension $K^* \succeq K$ and a tuple $\ba \in K^*$ such that $\varphi(\ba,K^*)$ is finite, but is not contained in the zero set of $P(\ba,y)$ for \emph{any} $P \in F[\bx,y]$ except those with
		$P(\ba,y) \equiv 0$.  Then $\varphi(\ba,K^*)$ contains a point not
		in $F(\ba)^\alg$, contradicting (2).

		(3)$\implies$(1).  Lemma~\ref{ablem} shows that if $K^*$ is an
		elementary extension of $K$, then
		\begin{itemize}
			\item $K^*$ is algebraically bounded over $F$.
			\item Field-theoretic and model-theoretic algebraic agree over $F$.
		\end{itemize}
		Therefore (1) holds.
	\end{proof}
	Specializing to the case where $K$ is a pure field and $F$ is the prime field, we get the
	following:
        \begin{theorem} \label{thm:equiv}
          Let $K$ be a pure field.  Then $K$ is algebraically bounded over
          the prime field if and only if $K$ is very slim.
        \end{theorem}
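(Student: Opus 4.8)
The plan is to read off Theorem~\ref{thm:equiv} from Lemma~\ref{2.12} by taking $F$ to be the prime field of $K$, and then checking that the three (equivalent) conditions of that lemma, when specialized to this $F$, unwind into the two notions named in the theorem. So there is nothing really to prove beyond a careful matching of definitions; the interest of the statement is that it isolates, for \emph{pure} fields, the cleanest form of the equivalence.

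First I would record the trivial observation that every subfield of any model contains the prime field, and that the prime field is contained in $\dcl(\emptyset)$. Consequently, for any elementary extension $K^* \succeq K$ and any subset $A \subseteq K^*$, writing $F$ for the prime field we have $\acl(AF) = \acl(A)$ (since $F \subseteq \dcl(\emptyset) \subseteq \acl(A)$) and $(AF)^\alg = (A)^\alg$, where $(A)^\alg$ denotes the field-theoretic algebraic closure of the subfield generated by $A$. Hence condition (1) of Lemma~\ref{2.12} for this choice of $F$ says exactly: in every $K^* \succeq K$ and for every $A \subseteq K^*$, each element of $\acl(A)$ lies in $(A)^\alg$. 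The reverse containment $(A)^\alg \cap K^* \subseteq \acl(A)$ is automatic, since a field-theoretically algebraic element satisfies a nontrivial one-variable polynomial over $A$ and so lies in a finite $A$-definable set. Therefore condition (1) of Lemma~\ref{2.12} is precisely the assertion that model-theoretic and field-theoretic algebraic closure coincide in every elementary extension of $K$, i.e.\ that $K$ is very slim in the sense of Junker--Koenigsmann.

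On the other hand, condition (3) of Lemma~\ref{2.12} with this $F$ is, verbatim, the statement of Definition~\ref{ab} that $K$ is algebraically bounded over the prime field. Applying the equivalence (1)$\iff$(3) of Lemma~\ref{2.12} then yields that $K$ is very slim if and only if $K$ is algebraically bounded over the prime field, which is the theorem. The only point one must not gloss over is the passage from the one-sided containment appearing in Lemma~\ref{2.12}(1) to the full equality demanded by the definition of ``very slim''; as noted above, the missing containment is trivial, so this is not a genuine obstacle. (There is likewise nothing lost in restricting $F$ to be the prime field rather than an arbitrary subfield: algebraic boundedness is always taken relative to some subfield, and the prime field is the natural minimal choice, in line with the convention of \cite{lou-dimension,JK-slim}.)
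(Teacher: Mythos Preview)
Your proposal is correct and follows exactly the paper's approach: the paper derives Theorem~\ref{thm:equiv} simply by specializing Lemma~\ref{2.12} to the case where $F$ is the prime field, and you have spelled out the (trivial) verification that condition~(1) of that lemma with this choice of $F$ is precisely the definition of ``very slim''. Your explicit remark that $\acl(AF)=\acl(A)$ and $(AF)^\alg=(A)^\alg$ because the prime field sits inside $\dcl(\varnothing)$, together with the automatic reverse inclusion, is a welcome elaboration of what the paper leaves implicit.
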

        We have thus answered~\cite[Question 1]{JK-slim} positively.
        \begin{remark} If $(K,+,\cdot,\ldots)$ is an \emph{expansion} of a field, and $(K,+,\cdot,\ldots)$ is algebraically bounded over the prime field, then the reduct $(K,+,\cdot)$ is also algebraically bounded over the prime field, and so the underlying field $(K,+,\cdot)$ is very slim.
        \end{remark}
        
	\begin{remark} \label{reducts}
	Reducts of geometric structures are geometric structures.  This is folklore, but we include a proof for completeness.  Let $N$ be a geometric structure and $M$ be a reduct.  Without loss of generality, $N$ and $M$ are highly saturated.  Uniform finiteness transfers from $N$ to $M$ in a trivial way: if $M$ fails uniform finiteness, the same definable family fails uniform finiteness in $N$.  Suppose $\acl^M(-)$ does not satisfy exchange.  Then there are some $a,b \in M$ and $C \subseteq M$ such that $a \notin \acl^M(C)$, $b \notin \acl^M(Ca)$, but $a \in \acl^M(Cb)$.  Let $p(x)$ and $q(x,y)$ be $\tp^M(a/C)$ and $\tp^M(a,b/C)$.  The number of realizations of $p(x)$ is large, so we may find $a' \models p$ with $a' \notin \acl^N(C)$.  Similarly, the number of realizations of $q(a',y)$ is large, so we may find a realization $b' \notin \acl^N(Ca')$.  Then $a'b' \equiv_C ab$ in $M$, so $a' \in \acl^M(Cb') \subseteq \acl^N(Cb')$.  Then $a'$ and $b'$ contradict the exchange property in $N$.
	
	Therefore, any reduct of an algebraically bounded field
                          is geometric, though not necessarily
                          strongly geometric.
	\end{remark}

	\begin{remark} \label{cxf}
		In future work, we will give an example of a pure field
		$(K,+,\cdot)$ of characteristic 0 with a subfield $K_0$ such that
		\begin{enumerate}
			\item Field-theoretic algebraic closure and model-theoretic
			algebraic closure agree over $K_0$, and this remains true in
			elementary extensions.
			\item $\acl(\varnothing)$ contains elements of $K_0$ that are
			transcendental over $\Qq$.  In particular, field-theoretic
			algebraic closure and model-theoretic algebraic closuse do not
			agree over $\Qq$.
		\end{enumerate}
		It follows that this field $K$ is algebraically bounded over $K_0$,
		but not over $\Qq$.  In particular, $K$ is algebraically bounded (over $K$) but not
		not very slim and not a strongly geometric field.\footnote{This contradicts the claim in \cite[p.\@ 482]{JK-slim} that algebraically bounded fields are very slim.} Additionally, the field
		$K$ is geometric (by Remark~\ref{reducts}) but not strongly geometric.

	\end{remark}
	Failure of $\acl(\varnothing)$ to be algebraic over the prime field is
	the \emph{only} way an algebraically bounded field can fail to be very
	slim:
	\begin{proposition}\label{prop:bdd-dcl}
		If $K = (K,+,\cdot,\ldots)$ is algebraically bounded, then $K$ is
		algebraically bounded over the subfield $F = \dcl(\varnothing)$.
	\end{proposition}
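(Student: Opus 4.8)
The plan is to reduce this to Lemma~\ref{2.12}. Write $F=\dcl(\varnothing)$, a subfield of $K$. By the equivalence of (1) and (3) in Lemma~\ref{2.12} it suffices to check condition~(1) for this $F$; after embedding into a monster model $\cU\succeq K$ it is enough to show $\acl(A)\subseteq F(A)^{\alg}$ for every $A\subseteq\cU$ (using that $F=\dcl(\varnothing)\subseteq\dcl(A)$, so $\acl(AF)=\acl(A)$ and $(AF)^{\alg}=F(A)^{\alg}$). By compactness we may take $A=\ba$ finite; replacing $\ba$ by a maximal $\acl$-independent subtuple (whose $\acl$ is the same, and whose $F(-)^{\alg}$ is smaller) we may assume $\ba$ is $\acl$-independent. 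Finally, every $b\in\acl(\ba)$ is a root of $\prod_i(y-b_i)$, where $\{b_i\}$ is the $\Aut(\cU/\ba)$-orbit of $b$; its coefficients are elementary symmetric functions of the $b_i$, hence lie in $\dcl(\ba)$, so $\acl(\ba)\subseteq\dcl(\ba)^{\alg}$. Thus it is enough to prove: \emph{if $\ba$ is $\acl$-independent and $c\in\dcl(\ba)$, then $c\in F(\ba)^{\alg}$.}

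Suppose not, and fix such $\ba,c$ with $c\notin F(\ba)^{\alg}$. By Lemma~\ref{ablem}(3) $\cU$ is algebraically bounded over $K$, so by Lemma~\ref{ablem}(1) model-theoretic algebraic closure over $K$ equals field-theoretic relative algebraic closure over $K$; in particular $\acl$-dimension over $K$ equals transcendence degree over $K$. Since $\ba$ is $\acl$-independent, $\tp(\ba/\varnothing)$ has a generic extension to a complete type over $K$ of dimension $|\ba|$; realizing it and transporting $c$ along ($c$ is the unique solution of some $\rho(\ba,y)$, so $\rho$ still has a unique solution at the new parameter), and noting that ``$c\notin F(\bx)^{\alg}$'' is a property of $\tp(\ba c/\varnothing)=\tp(\ba c/F)$, we may assume in addition that $\ba$ is algebraically independent over $K$. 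Also $c\in\dcl(\ba)\subseteq\acl(\ba)\subseteq\acl(\ba K)=K(\ba)^{\alg}\cap\cU$, so $c$ is algebraic over the field $K(\ba)$. I then claim $K\cap\acl(\ba)\subseteq F(\ba)^{\alg}$: if $k_0\in K\cap\acl(\ba)$ were transcendental over $F(\ba)$ then $k_0\notin F^{\alg}=\acl(\varnothing)$, so picking a minimal $\ba_0\subseteq\ba$ with $k_0\in\acl(\ba_0)$ and a coordinate $a_i\in\ba_0$, exchange gives $a_i\in\acl((\ba_0\setminus a_i)\cup\{k_0\})\subseteq\acl((\ba\setminus a_i)\cup K)=K(\ba\setminus a_i)^{\alg}\cap\cU$, so $a_i$ is algebraic over $K(\ba\setminus a_i)$ --- contradicting algebraic independence of $\ba$ over $K$.

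Now, since $\cU$ is saturated and $K,\ba$ are small, I would choose $\tau\in\Aut(\cU/\ba)$ with $K(\ba)$ and $\tau(K)(\ba)$ algebraically independent over $F(\ba)$: this is possible precisely because the elements of $K$ that cannot be moved into transcendentally generic position over $\ba$ all lie in $\acl(\ba)$, hence by the previous claim already lie in $F(\ba)^{\alg}$ and contribute nothing to transcendence degree over $F(\ba)$. As $c\in\dcl(\ba)$ and $\tau$ fixes $\ba$ pointwise, $\tau(c)=c$, and applying $\tau$ to ``$c$ is algebraic over $K(\ba)$'' gives $c\in\tau(K)(\ba)^{\alg}$; thus $c\in K(\ba)^{\alg}\cap\tau(K)(\ba)^{\alg}$. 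One then invokes the elementary field fact: if $L_1,L_2$ contain a common subfield $E$ and are algebraically independent over $E$, then $L_1^{\alg}\cap L_2^{\alg}=E^{\alg}$ (if $x$ lies in the intersection then $x$ is algebraic over $L_1$, so $L_1(x)$ and $L_2$ remain algebraically independent over $E$; since $x\in L_1(x)$ is algebraic over $L_2$ it is already algebraic over $E$). With $E=F(\ba)$, $L_1=K(\ba)$, $L_2=\tau(K)(\ba)$ this yields $c\in F(\ba)^{\alg}$, the desired contradiction.

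The step I expect to be the main obstacle is producing the automorphism $\tau$ that puts $K$ in algebraically generic position over $\ba$ --- and this is exactly why I first arrange $\ba$ to be algebraically independent over $K$, so that exchange forces $\acl(\ba)\cap K$ to be field-theoretically negligible over $F(\ba)$. The remaining things to spell out are routine: the displayed field-theoretic lemma, the verification that $\tp(\ba c/\varnothing)$ determines whether $c\in F(\ba)^{\alg}$, and the standard fact that $\acl$-dimension can be realized generically over an enlarged base in a geometric theory.
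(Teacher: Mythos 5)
The reductions in your first paragraph are all fine, and the displayed field-theoretic lemma at the end is correct. The gap is exactly where you predicted it: the existence of $\tau\in\Aut(\cU/\ba)$ with $K(\ba)$ and $\tau(K)(\ba)$ algebraically independent over $F(\ba)$. Your justification only controls $K\cap\acl(\ba)$, but that is not the obstruction. To build $\tau$ you must move $K$ as a whole, preserving $\tp(K/\ba)$; enumerating $K=(k_\alpha)$, the elements you can place generically over $K(\ba)$ are those with $k_\alpha\notin\acl(\ba k_{<\alpha})$, and the ``stuck'' ones are those in $\acl(\ba k_{<\alpha})$ --- not those in $\acl(\ba)$. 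A stuck element is forced into $\acl(\ba k'_{<\alpha})$, and a priori it may still be transcendental over $F(\ba)(k'_{<\alpha})$; this happens precisely when there are $k_1,k_2\in K$ with $k_2\in\acl(\ba k_1)\setminus F(\ba k_1)^{\alg}$, i.e., precisely when the statement being proved fails for tuples from $K$. Quantitatively: your construction yields $\trdeg(\tau(\bar{k})/K(\ba))=\dim(\bar{k}/\ba)=\dim(\bar{k})$ for finite $\bar{k}\subseteq K$, whereas algebraic independence of $K(\ba)$ and $\tau(K)(\ba)$ over $F(\ba)$ requires $\trdeg(\tau(\bar{k})/K(\ba))=\trdeg(\bar{k}/F)$; since $\dim(\bar{k})\le\trdeg(\bar{k}/F)$ always holds (because $F(\bar{u})^{\alg}\cap\cU\subseteq\acl(\bar{u})$), the needed equality $\dim(\bar{k})=\trdeg(\bar{k}/F)$ is equivalent to $\acl$ agreeing with $F$-algebraic closure on tuples from $K$ --- which is the proposition. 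So the argument is circular at its key step.

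The paper closes exactly this hole by a different device. Given $b\in\acl(\bc)$ with $(b,\bc)$ algebraically independent over $F$, it takes a $0$-definable finite-fiber family $D\ni(b,\bc)$, bounds $D$ by zero sets of polynomials over $K$ using algebraic boundedness, passes to the Zariski closure $V\subsetneq(\Kk^{\alg})^{n+1}$, and uses elimination of imaginaries in ACF plus perfection of $\Kk$ to produce a code $e\in\Kk$ for $V$. Since $D$ is $0$-definable, $e$ is $\Aut(\Kk)$-invariant, hence lies in $F=\dcl(\varnothing)$; then $V$ is $F$-definable of dimension less than $n+1$ yet contains a point algebraically independent over $F$, a contradiction. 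That invariance-of-the-code argument is precisely the mechanism your proof is missing for forcing the algebraic dependence to be witnessed by polynomials with coefficients in $F$ rather than in $K$.
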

	\begin{proof}
	    We use the criterion of Lemma~\ref{2.12}(1) to show that $K$ is algebraically bounded over $F$.
	    Embed $K$ into a monster model $\Kk$.  Suppose $b, \bc \in \Kk$ and $b \in \acl(F\bar{c})$.  We must show $b \in F(\bc)^\alg$.  By Remark~\ref{reducts}, $\Th(K)$ is geometric, because it is geometric after naming parameters from $F$.  Replacing $\bc$ by a basis of $\bc$ in the $\acl$-pregeometry, we may assume that the tuple $\bc$ is field- theoretically algebraically independent over $F$. Now suppose for the sake of contradiction that $b \notin F(\bc)^\alg$.  Then $(\bc,b)$ is also field-theoretically algebraically independent over $F$.
	    
	    As $b \in \acl(F \bc) = \acl(\bc)$, there is a 0-definable set $D \subseteq \Kk^n$ with $(b,\bc) \in D$ and $D_{\bc'}$ finite for each $\bc'$.  By algebraic boundedness over $K$, there are finitely many non-zero polynomials $P_1,\ldots,P_i \in K[x,\by]$ such that $D$ is contained in the union of the zero-sets of the $P_i$.
	    
	    Let $M = \Kk^\alg$ and let $V$ be the Zariski closure of $D$ in $M^{n+1}$.  The polynomials $P_i$ show that $V \subsetneq M^{n+1}$, and so $\dim(V) < n+1$.  By elimination of imaginaries in ACF, there is a finite tuple $e$ in $M$ which codes $V$.  Recall that $\Kk$ is perfect by Lemma~\ref{perfect}.  If $\sigma \in \Aut(M/\Kk)=\Gal(\Kk)$, then $\sigma(D) = D$, $\sigma(V) = V$, and $\sigma(e) = e$. As the tuple $e$ is fixed by $\Gal(M/\Kk)$, it must be in the perfect field $\Kk$.
     % Applying $p$th powers if necessary, we may assume that $e$ is a tuple in $\Kk$.
	    
	    If $\sigma$ is any automorphism of $\Kk$, then $\sigma$ can be extended to an automorphism $\sigma'$ of $M$.  The fact that $D$ is 0-definable implies that $\sigma'(D) = \sigma(D) = D$, which then implies $\sigma'(V) = V$ and $\sigma(e) = \sigma'(e) = e$.  Thus $e$ is $\Aut(\Kk)$-invariant, which implies that $e$ is in $F = \dcl(\varnothing)$.  Therefore, in the structure $M$, the $e$-definable set $V$ is in fact $F$-definable.  However, the tuple $(b,\bc) \in D \subseteq V$ is algebraically independent over $F$, so this implies $\dim(V) = n+1$, contradicting the earlier fact that $\dim(V) < n+1$.
	\end{proof}
 \begin{remark}\label{sizeS}
     Algebraically bounded fields are closely related to fields of size at most S in the sense of~\cite{JK-slim}. For fields of size at most S, \cite[Proposition 3.4]{JK-slim} and Lemma~\ref{2.12} show that they are algebraically bounded over $\dcl(\varnothing)$. On the other hand, by Lemma~\ref{2.12} and~\ref{prop:bdd-dcl}, any algebraically bounded field with $\mathrm{tr.deg}(\dcl(\varnothing))\in \Nn$ is of size at most S. 
\end{remark}
	
	\begin{question}
		Is there a pure field $K$ that is geometric, but not algebraically bounded?
	\end{question}

        \section{Cardinalities} \label{sec3}
        Fix a complete geometric theory $T$ expanding the theory of fields, not necessarily algebraically bounded.
	\begin{proposition} \label{card1}
		If $K \models T$ and $X \subseteq K^n$ is an infinite definable set,
		then $|X| = |K|$.
	\end{proposition}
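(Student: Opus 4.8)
The plan is to reduce to the one-variable case and then exploit the explicit surjection onto $K$ furnished by Lemma~\ref{trick2}. Since $X$ is infinite and $X \subseteq K^n$, the field $K$ is infinite, and the inequality $|X| \le |K^n| = |K|$ is immediate. So the entire content is the reverse inequality $|X| \ge |K|$.

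First I would locate a coordinate projection of $X$ that remains infinite: if every projection $\pi_i(X) \subseteq K$, for $1 \le i \le n$, were finite, then $X$ would be contained in the finite product $\prod_{i=1}^n \pi_i(X)$, hence finite, a contradiction. So fix $i$ with $Y := \pi_i(X)$ infinite. The set $Y$ is definable, being the projection of a definable set, and $\pi_i \colon X \to Y$ is a surjection, so $|Y| \le |X|$.

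Next I would apply Lemma~\ref{trick2} to the infinite definable set $Y \subseteq K$ in the pregeometric field $K$ (it is pregeometric since $T$ is geometric): this gives $K = \{(y - y')/(x' - x) : x, y, x', y' \in Y, \ x \ne x'\}$. Thus $K$ is the image of a function defined on a subset of $Y^4$, whence $|K| \le |Y^4| = |Y|$, the last equality because $Y$ is infinite. Chaining the inequalities, $|K| \le |Y| \le |X| \le |K^n| = |K|$, so equality holds throughout, and in particular $|X| = |K|$.

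I do not expect a real obstacle here: the argument is just the combinatorial trick of Lemma~\ref{trick} combined with the dimension-theoretic input (Fact~\ref{fact:dim}(\ref{d2}) together with $\dim K = 1$) that already underlies Lemma~\ref{trick2}. The only step needing a moment's care is the passage from $X \subseteq K^n$ to a one-variable definable set, which the coordinate-projection pigeonhole handles; alternatively one could invoke $\dim(X) \ge 1$ to find, for some $x \in X$ over suitable parameters $A$, a coordinate $x_i \notin \acl_A(\varnothing)$, forcing $\pi_i(X)$ to be infinite.
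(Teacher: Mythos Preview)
Your proof is correct and follows essentially the same approach as the paper: reduce to an infinite definable subset of $K^1$ via a coordinate projection, then invoke Lemma~\ref{trick2} to bound $|K|$ by the fourth power of that set. Your pigeonhole justification for why some projection is infinite is a useful detail that the paper leaves implicit.
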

	\begin{proof}
		Clearly $|X| \le |K|$.  We must show $|X| \ge |K|$.
                Replacing $X$ with a projection onto one of the
                coordinate axes, we may assume $X \subseteq K^1$.  By
                Lemma~\ref{trick2}, $K = \{(y-y')/(x'-x) : x,y,x',y'
                \in X, ~ x \ne x'\}$.  Therefore $|K| \le |X|^4 =
                |X|$.
        \end{proof}

    Proposition~\ref{card1} does not generalize to interpretable sets, as exhibited by the example of the value group $\Zz$ in the geometric field $\Qq_p$.
    In Proposition~\ref{card23} below, we will see that the obstruction is precisely the zero-dimensional interpretable sets.  Before proving this, we need a few general lemmas on geometric structures.
    \begin{lemma} \label{ranks}
		Suppose $M$ is a geometric structure and $X$ is an interpretable set in $M$ with
		$\dim(X) = d > 0$.
		\begin{enumerate}
		    \item There is an interpretable set $Y$ and
		finite-to-one interpretable functions $f : Y \to X$ and $g : Y \to
		M^d$ such that the image $g(Y)$ has dimension $d$.
		\item There is an infinite definable set $D$ with $|X| \ge |D|$.
		\end{enumerate}
	\end{lemma}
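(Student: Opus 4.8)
The plan is to prove (1) and then deduce (2) from it. For the deduction: since $\dim(X) = d > 0$ the interpretable set $X$ is infinite (here we use elimination of $\exists^\infty$), so $|X| \geq \aleph_0$; given $Y, f, g$ as in (1), finite-to-oneness of $f$ gives $|Y| \leq \aleph_0 \cdot |X| = |X|$, hence $|g(Y)| \leq |Y| \leq |X|$, while $D := g(Y) \subseteq M^d$ is a definable set of dimension $d > 0$, hence infinite by Fact~\ref{fact:dim}(\ref{d2}). Thus $D$ witnesses (2).

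To prove (1), I would first name parameters so that $X$ is $\emptyset$-interpretable, and pass to a monster model $\mathbb{U} \succeq M$. Pick $a \in X(\mathbb{U})$ with $\dim(a/\emptyset) = d$; this exists by the definition of $\dim(X)$. By Gagelman's description of dimension on imaginaries there is a real tuple $c$ with $a \in \acl^{\eq}(c)$. Let $c_1$ be a maximal subtuple of $c$ that is $\acl^{\eq}$-independent over $a$, and let $c_2$ consist of the remaining coordinates, so that $c_2 \in \acl^{\eq}(ac_1)$. A short computation with additivity of dimension then shows that $c_1$ is $\acl^{\eq}$-independent over $\emptyset$, that $\dim(a/c_1) = d$, and that $a$ and $c_2$ are interalgebraic over $c_1$. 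Replacing $c_2$ by a maximal $\acl^{\eq}$-independent-over-$c_1$ subtuple $c_2'$ (which then has length $d$), we keep $a$ and $c_2'$ interalgebraic over $c_1$.

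Now choose $\emptyset$-formulas $\varphi(x,u,v)$ and $\psi(v,u,x)$ witnessing respectively $a \in \acl^{\eq}(c_1 c_2')$ and $c_2' \in \acl^{\eq}(c_1 a)$; using elimination of $\exists^\infty$ we may build into $\varphi$ and $\psi$ uniform finite bounds on the number of solutions (in the $x$-variable and in the $v$-variable respectively). Writing $\pi_v$ for the projection $(x,u,v) \mapsto v$, set
\[
  \tilde Y := \{(x,u,v) : x \in X,\ u \in \mathbb{U}^{|c_1|},\ v \in \mathbb{U}^d,\ \varphi(x,u,v) \wedge \psi(v,u,x)\},
\]
an $\emptyset$-interpretable set containing $(a, c_1, c_2')$. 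For every fixed $u$, the maps $\tilde Y_u \to X$, $(x,v) \mapsto x$, and $\pi_v \colon \tilde Y_u \to \mathbb{U}^d$ are finite-to-one, as their fibres are bounded by the built-in bounds on $\psi$ and on $\varphi$. Since finite-to-one interpretable maps preserve dimension (for each point $y$, $y$ and its image are interalgebraic over the parameters), we get $\dim(\tilde Y_u) = \dim(\pi_v(\tilde Y_u)) \leq d$ for all $u$, and from the interalgebraicity of $a$ with $c_2'$ over $c_1$ one computes $\dim(\tilde Y_{c_1}) = d$.

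The last step — and the only delicate point — is to replace the parameter $c_1$, which may not lie in $M$, by some $u_0 \in M^{|c_1|}$ with $\dim(\tilde Y_{u_0}) = d$; then $Y := \tilde Y_{u_0}$ together with $f \colon (x,v) \mapsto x$ and $g := \pi_v$ is as required, since $g(Y) = \pi_v(\tilde Y_{u_0})$ is a definable subset of $M^d$ of dimension $d$. For this I would use the standard fact that in a geometric structure dimension is definable in interpretable families: one proves it by induction on the ambient dimension, using that $\exists^\infty$ is eliminated (so that "the fibre over $w$ is infinite" is a definable condition) together with the identity $\dim(S) = \max\bigl(\dim(\pi S),\, 1 + \dim\{w : S_w \text{ infinite}\}\bigr)$ valid when $\pi$ forgets a coordinate. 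Applying this to the interpretable family $\{\pi_v(\tilde Y_u)\}_u$ of subsets of $\mathbb{U}^d$, the set $\{u : \dim(\pi_v(\tilde Y_u)) = d\}$ is definable over the named parameters and is nonempty (it contains $c_1$), so by elementarity it meets $M$; any such point is the desired $u_0$.
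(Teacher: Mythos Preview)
Your proof is correct and follows essentially the same strategy as the paper: pick a generic imaginary point, find a real tuple making it algebraic, split off a maximal subtuple independent over the imaginary, observe interalgebraicity with the remainder, package this into an interpretable $Y$ with two finite-to-one projections, and use definability of dimension to pull the auxiliary parameter back into $M$. The only cosmetic differences are that the paper works over $M$ rather than naming parameters, and first replaces the real tuple by an independent subtuple (so that the complementary piece automatically has length $d$) instead of extracting a basis of $c_2$ afterwards as you do; your deduction of (2) from (1) via $D=g(Y)$ is also exactly the paper's (the paper's ``(2) implies (1)'' is a typo for the reverse implication).
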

	\begin{proof}
	Note that (2) implies (1), by taking $D = g(Y)$.  We prove (1).
		Embed $M$ into a monster model $\Mm \succeq M$.  Take $e \in X$ with
		$\dim(e/M) = d$.  Then $e \in \Mm^\eq$.  Every imaginary is
		definable from a real tuple, so there is a real tuple $\ba \in
		\Mm^m$ with $e \in \acl^\eq(M\ba)$.  Replacing $\ba$ with a subtuple, we may assume that $\ba$ is independent over $M$.  By~\cite[Lemma 3.1]{gagelman},
		$\acl(-)$ continues to satisfy exchange after naming the parameter
		$e$.  Therefore, we can meaningfully talk about real tuples being
		independent over $eM$.  Write $\ba$ as $\bb\bc$ where $\bb$ is a
		maximal subtuple that is independent over $eM$.  Then
		$\bc \in \acl(\bb eM)$.  At the same time, $e \in \acl(\bb \bc M)$,
		so $e$ is interalgebraic with $\bc$ over $\bb M$.
		
		Meanwhile, $\dim(\bb/eM) = \dim(\bb/M) = |\bb|$, because $\bb$ is an independent tuple over $eM$.  Then
		$\bb$ and $e$ are independent from each other over $M$, implying $\dim(e/\bb M) =
		\dim(e/M) = d$ by symmetry.  As $e$ and $\bc$ are interalgebraic over $\bb M$,
		we have $\dim(\bc/ \bb M) = d$.  But $\bb\bc$ is an independent tuple over
		$M$, so $\dim(\bc/ \bb M)$ is the length of $\bc$.  Thus $\bc \in M^d$.
		
		As $e$ and $\bc$ are interalgebraic over $\bb M$, there is a $\bb
		M$-interpretable set $Y_0 \subseteq X \times \Mm^d$ such that
		$(e,\bc) \in Y_0$, and the projections $f_0 : Y_0 \to X$ and $g_0 :
		Y_0 \to \Mm^d$ have finite fibers.  By saturation, there is a uniform bound $N$ on the fiber size.  The image $g_0(Y_0)$ is $\bb M$-definable
		and contains the point $\bc$ with $\dim(\bc/ \bb M) = d$.  Therefore
		$g_0(Y_0)$ has dimension $d$.
		
		Now we have the desired configuration $(Y_0,f_0,g_0)$, but defined
		over the parameter $\bb$ outside $M$.  Because $M \preceq \Mm$ and dimension is
		definable in families \cite[Fact~2.4]{gagelman}, we can replace the
		parameter $\bb$ with something in $M$, getting a $M$-definable
		configuration $(Y,f,g)$ in which the fibers of $f$ and $g$ are still bounded in size by $N$.
	\end{proof}
	\begin{definition}
	  \label{largeness}
	  Let $M$ be a structure.  A \emph{definable notion of largeness}\footnote{This is a purely model-theoretic notion and should not be confused with the notion of large fields.} on $M$ is a partition of the $M$-definable sets into two classes---large and small---such that the following axioms hold:
	  \begin{enumerate}
	      \item Any finite set is small.
	      \item Any definable subset of a small set is small.
	      \item If $Y$ is small and $\{X_a\}_{a \in Y}$ is a definable family of small sets, then the union $\bigcup_{a \in Y} X_a$ is small.
	      \item Smallness is definable in families: if $\{X_a\}_{a \in Y}$ is a definable family, then the set $\{a \in Y : X_a \text{ is small}\}$ is definable.
	  \end{enumerate}
	\end{definition}
	If $N \succeq M$, then any definable notion of largeness on $M$ extends in a canonical way to a definable notion of largeness on $N$ by extending the definition according to (4) above.
	\begin{fact}
	    \label{keisler}
	    Let $M$ be a countable structure in a countable language.  Fix a definable notion of largeness on $M$.  Then there is an elementary extension $N \succeq M$ such that if $X$ is definable in $N$, then $X$ is uncountable if and only if $X$ is large.
	\end{fact}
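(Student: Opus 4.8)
The plan is to carry out a two-cardinal (Keisler-style) construction. First I would build $N$ as the union of a continuous elementary chain of countable models $M = N_0 \preceq N_1 \preceq \cdots \preceq N_\alpha \preceq \cdots$, $\alpha < \omega_1$; we may assume $M$ is large as a subset of $M^1$, since otherwise every definable set is small and $N = M$ works. Along the chain I would keep a nested, always-countable family $\mathcal Z_\alpha$ of ``frozen'' sets, namely all small sets definable over $N_\alpha$, together with a bookkeeping list that assigns to each successor stage $\alpha+1$ a large set $X_\alpha$ definable over $N_\alpha$, so that every large set definable over any $N_\beta$ is assigned cofinally often. The chain is to be arranged so that at each successor stage $X_\alpha(N_{\alpha+1}) \supsetneq X_\alpha(N_\alpha)$ while $Z(N_{\alpha+1}) = Z(N_\alpha)$ for all $Z \in \mathcal Z_\alpha$; limit stages are unions, and since any small set definable over $N_\lambda$ has parameters in some $N_\alpha$ ($\alpha < \lambda$) it was already frozen, so nothing escapes at limits. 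Granting this, a definable set $X$ of $N$ has parameters in some $N_\alpha$; if $X$ is small it is eventually frozen, so $X(N)$ stabilizes and is countable, while if $X$ is large it grows at cofinally many stages, so $|X(N)| = \aleph_1$.

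The substance is the successor step, which I would reduce to the following: if $P$ is a countable large structure, $X$ a large set over $P$, and $\mathcal Z$ a countable collection of small sets over $P$, there is a countable $P' \succeq P$ containing a point of $X$ outside $P$ with $Z(P') = Z(P)$ for every $Z \in \mathcal Z$. To prove this, work in a monster $\Mm \succeq P$ and build a \emph{complete positive type} $q(\bar x)$ over $P$ --- complete, containing ``$\bar x \in X$'', and containing no formula that defines a small set --- with the additional feature that for every formula $\phi(\bar u, \bar x)$ over $P$ and every $Z \in \mathcal Z$ (say $Z \subseteq M^{|\bar u|}$), $q$ contains one of (a) ``$\phi(\Mm,\bar x) \not\subseteq Z$'', (b) ``$\phi(\Mm,\bar x) = \varnothing$'', or (c) ``$\bar z^{*} \in \phi(\Mm,\bar x)$'' for some fixed $\bar z^{*} \in Z(P)$. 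One builds $q$ by enumerating these countably many requirements while maintaining a ``current'' large formula $\chi(\bar x)$ over $P$. The requirement ``$\bar x \notin W$'' ($W$ small over $P$) is harmless, since $\chi \setminus W$ is large whenever $\chi$ is large --- otherwise $\chi$ would be the union of the small sets $\chi \cap W$ and $\chi \setminus W$. For the trichotomy, split $\chi = \chi_a \cup \chi_b \cup \chi_c$ according to whether $\phi(\Mm,\bar x)$ fails to be inside $Z$, is empty, or is a nonempty subset of $Z$; if $\chi_a$ or $\chi_b$ is large, add (a) or (b); otherwise $\chi_c$ is large, and since $\phi(\Mm,\bar x')$ is a nonempty subset of $Z$ for each $\bar x' \in \chi_c$, one has $\chi_c = \bigcup_{\bar u \in Z}\bigl(\chi_c \wedge \phi(\bar u, \bar x)\bigr)$. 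By axiom~(3) (the index set $Z$ being small) some fibre $\chi_c \wedge \phi(\bar u^{*},\bar x)$ is large; by axiom~(4) the set of such $\bar u^{*}$ is a nonempty small set over $P$, hence meets $P$, which supplies the $\bar z^{*} \in Z(P)$ needed for (c).

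Finally, put $T^{*} = \Th(P_P) \cup \{q(\bar d)\}$ --- the elementary diagram of $P$ together with the statement that new constants $\bar d$ realize $q$ --- a complete theory. For each $Z \in \mathcal Z$ the type $q_Z(\bar v) = \{\bar v \in Z\} \cup \{\bar v \ne \bar e : \bar e \in Z(P)\}$, saying ``$\bar v$ is a new point of $Z$'', is non-isolated over $T^{*}$: if $\theta(\bar v, \bar d)$ isolated it, then $q$ would record $\theta(\Mm,\bar x)$ as a nonempty subset of $Z$ avoiding $Z(P)$, ruling out clauses (a) and (b) for $\phi = \theta$, while clause (c) produces a fixed $\bar z^{*} \in Z(P)$ with $\theta(\bar z^{*},\bar d)$ --- a realization of $\theta(\bar v,\bar d)$ that lies in $Z(P)$ rather than being a new point. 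By the omitting types theorem there is a countable model $P' \models T^{*}$ omitting every $q_Z$; then $P' \succeq P$, the tuple $\bar d$ (realizing the positive type $q$) lies in $X$ and outside $P$, and $Z(P') = Z(P)$ for all $Z \in \mathcal Z$. Taking $N_{\alpha+1} = P'$ with $X = X_\alpha$ and $\mathcal Z = \mathcal Z_\alpha$ completes the step.

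I expect the construction of $q$ --- imposing all of clauses (a)/(b)/(c), over all formulas $\phi$ and all frozen $Z$, simultaneously, while keeping $q$ positive and consistent --- to be the only real difficulty. It is here that the axioms of a definable notion of largeness are used, with axioms~(3) and~(4) providing exactly the fibrewise decomposition that pushes an otherwise-escaping point of $Z$ back into $P$. The rest --- the chain, the bookkeeping, and the limit stages --- is routine.
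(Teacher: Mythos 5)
Your proof is correct, but it takes a genuinely different route from the paper's. The paper translates the statement into the logic $\mathcal{L}(Q)$---interpreting $(Qx)\varphi$ as ``$\varphi(N,\bar{b})$ is large,'' checking that the axioms of Definition~\ref{largeness} make the translated theory closed under Keisler's rules of inference---and then invokes Keisler's completeness theorem as a black box. You instead reprove the model-existence content of that theorem directly: an $\omega_1$-chain of countable models with bookkeeping, a complete ``everywhere large'' type realized at each successor stage, and the omitting types theorem to keep the frozen small sets from acquiring new points. The mathematical core is the same---your trichotomy (a)/(b)/(c), with axioms (3) and (4) used to push the index of a large fibre back into $Z(P)$, is exactly the work done by Keisler's axiom $Qy\,\exists x\,\varphi \to (\exists x\, Qy\,\varphi \vee Qx\,\exists y\,\varphi)$---but your argument is self-contained where the paper's is a citation plus a translation it admits is ``sufficiently confusing.'' Your non-isolation argument for $q_Z$ and the stabilization of small sets at limit stages are both sound. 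Two small points to tighten: (i) you apply the largeness axioms to sets definable over the intermediate models $N_\alpha$, not just over $M$; this is legitimate because axiom (4) makes axioms (1)--(3) first-order schemas over $M$, which therefore transfer to elementary extensions, but it should be said explicitly (it is the precise content of the remark following Definition~\ref{largeness}); (ii) in clause (c), the smallness of $\{\bar{u} \in Z : \chi_c \wedge \phi(\bar{u},\bar{x}) \text{ large}\}$ is irrelevant---any nonempty $P$-definable set meets $P$ by elementarity---and ``a union of two small sets is small'' deserves its one-line derivation from axiom (3) with a two-element index set.
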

	Fact~\ref{keisler} is essentially Keisler's completeness theorem for $\mathcal{L}(Q)$ \cite[Section~2]{keisler}, but the translation between these settings is sufficiently confusing that we give the details.
	\begin{proof}[Proof (of Fact~\ref{keisler})]
	    Let $T$ be the elementary diagram of $M$.  Let $\mathcal{L}$ be the language of $T$, and let $\mathcal{L}(Q)$ be the language obtained by adding a new quantifier $(Qx)$.  Let $\psi \mapsto \psi^*$ be the map from $\mathcal{L}(Q)$-formulas to $\mathcal{L}$-formulas interpreting $(Qx)\varphi(x,\by)$ as ``the set of $x$ such that $\varphi(x,\by)$ holds is large.''  More precisely,
	    \begin{itemize}
	        \item $\varphi^* = \varphi$ if $\varphi$ doesn't involve the quantifier $Q$.
	        \item $(\varphi \wedge\psi)^* = \varphi^* \wedge \psi^*$, and similarly for the other logical operators including $\exists$ and $\forall$.
	        \item $((Qx)\varphi(x,\by))^*$ is the formula $\psi(\by)$ such that in models $N \models T$,
	        \[ N \models \psi(\bb) \iff \left(\varphi^*(N,\bb) \text{ is large}\right).\]
	    \end{itemize}
	    Let $T'$ be the set of $\mathcal{L}(Q)$-sentences $\varphi$ such that $T \vdash \varphi^*$.  It is straightforward to verify that $T'$ is closed under the rules of inference on pages 6--7 of \cite{keisler}.  For example, the ``axioms of $\mathcal{L}(Q)$'' \cite[p.\@ 6]{keisler} correspond to the axioms in Definition~\ref{largeness}.  By the completeness theorem for $\mathcal{L}(Q)$ \cite[Section~2]{keisler}, there is an $\mathcal{L}$-structure $N$ which satisfies the sentences $T'$, when $(Qx)$ is interpreted as ``there are uncountably many $x$ such that\ldots''  Ignoring the sentences involving $Q$, we see that $N \models T$, and so $N \succeq M$.  Finally, suppose $X = \varphi(N,\bb)$ is definable in $N$.  Let $\psi(\by)$ be the $\mathcal{L}$-formula such that $\psi(\bb)$ holds iff $\varphi(N,\bb)$ is large.  Then $T'$ contains the sentence
	    \[ (\forall \by)[\psi(\by) \leftrightarrow (Qx)\varphi(x,\by)],\]
	    because its image under $(-)^*$ is the tautology
	    \[ (\forall \by)[\psi(\by) \leftrightarrow \psi(\by)].\]
	    Therefore
	    \[ \varphi(N,\bb) \text{ is uncountable} \iff N \models \psi(\bb) \iff \varphi(N,\bb) \text{ is large}. \qedhere\]
	\end{proof}

    \begin{lemma}\label{keisler2}
	Let $T$ be a complete geometric theory in a countable language.  Then there is a model $N \models T$ such that for any interpretable set $X$, $\dim(X) > 0 \iff |X| > \aleph_0$.
	\end{lemma}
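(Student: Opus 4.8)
The plan is to apply Fact~\ref{keisler}, but to the structure $M^\eq$ rather than to $M$, so that interpretable sets of a model become genuine definable sets. If $T$ has a finite model then every model of $T$ is finite, every interpretable set is finite, and the claimed equivalence holds vacuously; so we may assume $T$ has infinite models and fix a countable $M \models T$. View $M^\eq$ as a countable structure in a countable language (adding sort predicates if one prefers a one-sorted structure), and equip it with the notion of largeness in which a definable set $X$ of $M^\eq$---that is, an interpretable set of $M$---is \emph{large} when $\dim(X) > 0$ and \emph{small} when $\dim(X) = 0$, using the dimension theory on $M^\eq$ from~\cite{gagelman}, which is available since $T$ is geometric.

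The main work is to verify the axioms of Definition~\ref{largeness} for this notion. Axiom (1): finite interpretable sets have dimension $0$ (the $\Rightarrow$ direction of Fact~\ref{fact:dim}(\ref{d2}), which does survive in $M^\eq$). Axiom (2): dimension is monotone under definable inclusions, by the injection clause of Fact~\ref{fact:dim} in $M^\eq$. Axiom (3): if $\dim(Y) = 0$ and $\{X_a\}_{a \in Y}$ is a definable family with $\dim(X_a) = 0$ for each $a \in Y$, then the set $Z = \{(a,x) : a \in Y,\ x \in X_a\}$ projects onto $Y$ with $0$-dimensional fibres, so $\dim(Z) \le \dim(Y) = 0$ by the fibre-dimension inequality for $M^\eq$; and $\bigcup_{a \in Y} X_a$ is a surjective image of $Z$, so it too has dimension $0$. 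Axiom (4): the set $\{a \in Y : \dim(X_a) = 0\}$ is definable (dimension is definable in interpretable families, \cite{gagelman}; for families of genuinely definable sets this is elimination of $\exists^\infty$), and its defining formula is elementary, so the canonical extension of this notion of largeness to any $N^* \succeq M^\eq$ is again ``$\dim > 0$''.

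Now Fact~\ref{keisler} provides an elementary extension $M^\eq \preceq N^*$ in which a definable set is uncountable if and only if it is large, i.e.\ has positive dimension. Every model of $\Th(M^\eq)$ has the form $N_0^\eq$, where $N_0 \models T$ is its home sort (the theory asserts each quotient map is surjective), so $N^* = N^\eq$ for some $N \succeq M$ with $N \models T$. The definable sets of $N^\eq$ are exactly the interpretable sets of $N$, with matching cardinalities and dimensions, so $N$ works. The substantive half of the statement is that $N$ can be arranged so that even \emph{zero}-dimensional interpretable sets are countable---such sets can be infinite, e.g.\ the value group $\Zz$ of $\Qq_p$---and this is precisely what the largeness axioms deliver through Keisler's theorem. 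Accordingly, the point that needs care is the verification of axioms (3) and (4), which rest on the dimension theory of $M^\eq$ from~\cite{gagelman}; identifying the models of $\Th(M^\eq)$ with the $\eq$'s of models of $T$ is routine.
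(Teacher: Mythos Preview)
Your proposal is correct and follows essentially the same approach as the paper: equip $M^\eq$ with the notion of largeness given by positive dimension, verify the axioms of Definition~\ref{largeness}, and apply Fact~\ref{keisler}. You supply more detail than the paper---which simply cites \cite{admissible} for the largeness axioms and leaves implicit that an elementary extension of $M^\eq$ has the form $N^\eq$ for some $N \models T$---but the strategy is identical.
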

	\begin{proof}
	Take a model $M \models T$.  There is a definable notion of largeness on $M^\eq$ in which $X$ is large iff $\dim(X) > 0$.  The requirements of Definition~\ref{largeness} hold by properties of dimension in $M^\eq$ \cite[Propositions~2.8, 2.9, 2.12]{admissible}.
	Applying Fact~\ref{keisler}, we get an elementary extension $N^\eq \succeq M^\eq$ such that if $X$ is definable in $N^\eq$, then $X$ is uncountable iff $\dim(X) > 0$.
	\end{proof}

	\begin{proposition} \label{card23}
	  Let $T$ be a complete, geometric theory of infinite fields, possibly with extra structure.
	  \begin{enumerate}
	      \item If $K \models T$ and $X$ is an interpretable set of positive dimension, then $|X| = |K|$.
	      \item If the language is countable, there is a model $K \models T$ of cardinality $\aleph_1$, such that every zero-dimensional interpretable set is countable.
	  \end{enumerate}
	\end{proposition}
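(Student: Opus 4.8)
The plan is to handle the two parts separately; part (1) is a short combination of results already available, and part (2) rests on Lemma~\ref{keisler2} together with a downward L\"owenheim--Skolem argument.

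For part (1), suppose $X$ is interpretable with $\dim(X) = d > 0$. The upper bound $|X| \le |K|$ is automatic: an interpretable set is a quotient $Z/E$ of a definable set $Z \subseteq K^m$, and $|Z| \le |K^m| = |K|$ since $K$ is infinite. For the lower bound I would apply Lemma~\ref{ranks}(2) to obtain an infinite $K$-definable set $D$ with $|X| \ge |D|$, and then Proposition~\ref{card1} gives $|D| = |K|$. Hence $|X| = |K|$. I do not expect any difficulty here.

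For part (2), begin with Lemma~\ref{keisler2}: since $T$ is complete in a countable language and its models are infinite, the lemma yields a model $N \models T$ in which an interpretable set is uncountable if and only if it has positive dimension. Applying this to the home sort, whose dimension is $1$ by Fact~\ref{fact:dim}, we get $|N| \ge \aleph_1$. Then take an elementary substructure $K \preceq N$ with $|K| = \aleph_1$ (possible since the language is countable and $\aleph_1 \le |N|$). This $K$ will be the desired model, once we verify that every zero-dimensional interpretable set of $K$ is countable.

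So let $X = Z/E$ be a zero-dimensional interpretable set of $K$, with $Z \subseteq K^m$ and $E$ a definable equivalence relation, all over parameters $\ba \in K$. Because $K \preceq N$ (hence $K^\eq \preceq N^\eq$), the same data defines an interpretable set $X' = Z(N)/E(N)$ in $N$, and $\dim(X') = \dim(X) = 0$, since the dimension of an interpretable set is computed in a common monster model over the same parameters. By the defining property of $N$, $X'$ is countable. The natural map $Z(K)/E(K) \to Z(N)/E(N)$ is injective --- if $z,z' \in Z(K)$ are $E(N)$-equivalent, they are $E(K)$-equivalent by elementarity --- so $|X| \le |X'| \le \aleph_0$, as required. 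The only point that needs care is this $\eq$-bookkeeping: that dimension and cardinality of interpretable sets behave correctly under passage to an elementary substructure. The one genuine subtlety is that Lemma~\ref{keisler2} does not by itself control the cardinality of $N$, which is exactly why the L\"owenheim--Skolem step is inserted; everything else is routine.
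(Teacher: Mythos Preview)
Your proof is correct and follows essentially the same route as the paper: part~(1) combines Lemma~\ref{ranks}(2) with Proposition~\ref{card1}, and part~(2) invokes Lemma~\ref{keisler2} followed by downward L\"owenheim--Skolem. You are simply more explicit than the paper about why the L\"owenheim--Skolem step preserves countability of zero-dimensional interpretable sets (via the injection $Z(K)/E(K) \hookrightarrow Z(N)/E(N)$), which the paper leaves implicit.
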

	\begin{proof}
	    \begin{enumerate}
	        \item Suppose $X$ has positive dimension.  Lemma~\ref{ranks} shows that $|X| \ge |D|$ for some infinite definable set $D$.  Then $|D| \ge |X| \ge |K|$ by Proposition~\ref{card1}.  Finally, $|K| \ge |X|$ is clear.
	        \item Lemma~\ref{keisler2} gives an uncountable model $K$ in which every zero-dimensional interpretable set is countable.  By downward L\"owenheim-Skolem, we can replace $K$ with an elementary substructure of cardinality $\aleph_1$.  \qedhere
	    \end{enumerate}
	\end{proof}

        	\section{Finite extensions} \label{sec2}
     
	Let $T$ be a complete, geometric theory of fields, possibly with extra structure, not necessarily algebraically bounded.

	\begin{proposition}\label{fin-ext}
		If $K \models T$ and $n\in \mathbb{N}_{>0}$, then the (interpretable) set of degree $n$ finite extensions has dimension zero.
	\end{proposition}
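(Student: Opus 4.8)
The plan is to compute $\dim(X_n)$ directly from the additivity of dimension in $\Kk^{\eq}$, where $\Kk \models T$ is a monster model; we may assume $\Kk$ is infinite, since otherwise $X_n$ is finite. I would interpret $X_n$ as the quotient $P_n^{\mathrm{irr}}/\!\!\sim$, where $P_n^{\mathrm{irr}} \subseteq \Kk^n$ is the definable set of coefficient tuples of monic irreducible polynomials of degree $n$, where $p_{\bar{c}}$ denotes the monic polynomial with coefficient tuple $\bar{c}$, and $\bar{c} \sim \bar{c}'$ iff $\Kk[x]/(p_{\bar{c}}) \cong_{\Kk} \Kk[x]/(p_{\bar{c}'})$. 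A point $e \in X_n$ is then literally a $\sim$-class $C_e \subseteq P_n^{\mathrm{irr}}$, and for any $\bar{c} \in C_e$ the set $C_e$ consists exactly of the coefficient tuples of minimal polynomials of primitive elements of the extension $L := \Kk[x]/(p_{\bar{c}})$. (The naive ``set of degree-$n$ subfields of $\Kk^{\alg}$'', and likewise the literal finite extensions counted in Corollary~\ref{final}, are related to this $X_n$ by finite-to-one interpretable maps, hence have the same dimension.) Since $\dim(X_n)$ does not depend on the parameters (Fact~\ref{fact:dim}), it equals $\max\{\dim(e/\varnothing) : e \in X_n\}$, so it suffices to show $\dim(e/\varnothing) = 0$ for an arbitrary $e \in X_n$.

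The crux is the claim that the fibre $C_e$ has dimension $n$. To see this, fix $\bar{c}_0 \in C_e$ and pass to parameters including $\bar{c}_0$; put $L_0 := \Kk[x]/(p_{\bar{c}_0})$, viewed as the $\bar{c}_0$-definable field structure on $\Kk^n$, so $\dim(L_0) = n$ by Fact~\ref{fact:dim}. As $\Kk$ is perfect (Lemma~\ref{perfect}), $L_0/\Kk$ is separable, hence has only finitely many intermediate fields, each a proper $\Kk$-subspace of $L_0$ and so of dimension $< n$; since an $n$-dimensional vector space over an infinite field is not a finite union of proper subspaces, the set $P$ of primitive elements of $L_0$ is a nonempty $\bar{c}_0$-definable subset of $L_0$ with $\dim(P) = n$. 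The map $P \to C_e$ sending $\beta$ to the coefficient tuple of its minimal polynomial over $\Kk$ is $\bar{c}_0$-definable, surjective, and at most $n$-to-one (its fibre over $p$ is the set of roots of $p$ lying in $L_0$); finite-to-one surjections preserve dimension, so $\dim(C_e) = \dim(P) = n$.

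Now, using that $C_e$ is $e$-definable of dimension $n$ (and again the parameter-independence of $\dim$), choose $\bar{c} \in C_e$ with $\dim(\bar{c}/e) = n$. On one hand $\bar{c}$ is an $n$-tuple from $\Kk$, so $\dim(\bar{c}/\varnothing) \le n$; on the other hand $e \in \dcl^{\eq}(\bar{c})$ because the quotient map $P_n^{\mathrm{irr}} \to X_n$ is $\varnothing$-definable. Applying the additivity of dimension in $\Kk^{\eq}$ \cite{gagelman} to the real tuple $\bar{c}$ and the imaginary $e \in \acl^{\eq}(\bar{c})$,
\[ \dim(e/\varnothing) \;=\; \dim(\bar{c}/\varnothing) - \dim(\bar{c}/e) \;\le\; n - n \;=\; 0 . \]
Hence $\dim(e/\varnothing) = 0$, and since $e$ was arbitrary, $\dim(X_n) = 0$.

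The step I expect to require the most care is the bookkeeping around the interpretation: checking that $X_n$, the field $L_0$, its primitive locus $P$, and the minimal-polynomial map are all genuinely interpretable with the parameters claimed, and that replacing isomorphism classes of extensions by literal subfields of $\Kk^{\alg}$ costs only finite-to-one correspondences. Once the interpretation is pinned down, everything else is the standard dimension calculus of geometric structures, valid in $M^{\eq}$ by \cite{gagelman}, together with the elementary linear-algebra fact about finite unions of proper subspaces.
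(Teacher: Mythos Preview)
Your proof is correct and is essentially the same argument as the paper's: both realize the set of degree-$n$ extensions as the image of the definable set of monic irreducible degree-$n$ polynomials, show each fibre has dimension $n$ by identifying it with the minimal polynomials of the primitive locus in $L\cong K^n$ (using perfectness to get finitely many intermediate fields), and conclude via Gagelman's additivity formula. The only cosmetic difference is that the paper phrases the final step as ``by the definition of dimension for interpretable sets'' rather than writing out $\dim(e/\varnothing)=\dim(\bar c/\varnothing)-\dim(\bar c/e)$ explicitly.
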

	\begin{proof}
		By Lemma~\ref{perfect}, $K$ is perfect. Hence any finite extension of $K$ is a simple extension. Let $X$ be the set of irreducible monic polynomials of
		degree $n$. We can regard $X$ as a definable subset of $K^n$ by
		identifying a polynomial $P(x) = x^n + c_{n-1}x^{n-1} + \cdots +
		c_0$ with the $n$-tuple $(c_0,c_1,\ldots,c_{n-1})$.  Let $Y$ be the
		interpretable set of degree $n$ finite extensions.  Let $f : X \to
		Y$ be the map sending $P(x)$ to the extension $K[x]/(P(x))$.
		
		Note that $\dim(X) \le \dim(K^n) = n$.  By the definition of dimension for interpretable sets, it suffices to show that each fiber of $f$ has dimension at least $n$.
		Fix some $b \in Y$ corresponding to a degree $n$ extension $L/K$.
		We claim $f^{-1}(b)$ has dimension $n$.  By identifying $L$ with
		$K^n$, we can regard $L$ as a definable set with $\dim(L) = n$.
		Because $K$ is perfect, there are only finitely many fields between
		$K$ and $L$.  Let $U$ be the union of the intermediate fields.  Then
		$U$ has lower dimension than $n$, so $\dim(L \setminus U) = n$.  The
		elements of $L \setminus U$ are generators of $L$.  The fiber
		$f^{-1}(b)$ is the set of minimal polynomials of elements of $L
		\setminus U$.  Let $\rho : (L \setminus U) \to f^{-1}(b)$
		be the map sending $a \in L \setminus U$ to its minimal polynomial.
		Then $\rho$ is finite-to-one, so $\dim(f^{-1}(b)) \ge \dim(L
		\setminus U) = n$.
	\end{proof}
 We recover the following corollary, which is presumably well-known (for example, it follows from~\cite[Corollary 3.6]{gagelman} and~\cite[Th\'eor\`eme]{corps-chirurgie}).
	\begin{corollary}
		If $T$ eliminates imaginaries, then models of $T$ have bounded
		Galois group---there are only finitely many extensions of degree $n$
		for each $n$.
	\end{corollary}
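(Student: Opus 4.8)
The plan is to combine Proposition~\ref{fin-ext} with the failure of Fact~\ref{fact:dim}(\ref{d4}) and the $\Leftarrow$ direction of (\ref{d2}) in $M^\eq$. If $T$ eliminates imaginaries, then any interpretable set is (in definable bijection with) a genuinely definable set, so the dimension theory on $M^\eq$ collapses back to the dimension theory on $M$, where \emph{all} of Fact~\ref{fact:dim} holds. In particular the $\Leftarrow$ direction of (\ref{d2}) is restored: a zero-dimensional definable set is finite.

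First I would fix $K \models T$ and $n \in \Nn_{>0}$, and let $Y$ be the interpretable set of degree $n$ extensions of $K$ as in Proposition~\ref{fin-ext}. By Proposition~\ref{fin-ext}, $\dim(Y) = 0$. Because $T$ eliminates imaginaries, $Y$ is in definable bijection with a definable subset $Y' \subseteq K^m$ for some $m$, and $\dim(Y') = \dim(Y) = 0$ (dimension is a bijection invariant by Fact~\ref{fact:dim}(5), applied to the bijection and its inverse). Now $Y'$ is a genuine definable subset of $K^m$ of dimension $0$, so by Fact~\ref{fact:dim}(\ref{d2}) — the honest version for definable sets — $Y'$ is finite, hence $Y$ is finite. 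Thus $K$ has only finitely many extensions of degree $n$ for every $n$, i.e.\ $K$ has bounded Galois group. (Since $K$ is perfect by Lemma~\ref{perfect}, one could also phrase this in terms of $\Gal(K)$ being a small profinite group, but the bare count of finite extensions is all we need.)

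I do not anticipate a serious obstacle here; the only point requiring a moment's care is the invocation of elimination of imaginaries to transfer the dimension statement from the interpretable world (where $\dim = 0$ does not force finiteness) to the definable world (where it does). One should make sure that the definable bijection $Y \to Y'$ preserves dimension — this is immediate from Fact~\ref{fact:dim}(5) applied in both directions, using that $Y', Y$ can both be viewed as interpretable sets so the general (imaginary) dimension theory applies to the bijection, while finiteness of $Y'$ is read off from the \emph{definable} part of Fact~\ref{fact:dim}. Alternatively, and perhaps more cleanly, one invokes the cited results: under elimination of imaginaries the profinite Galois group is itself (pro-)interpreted, and the references \cite[Corollary~3.6]{gagelman} and \cite[Th\'eor\`eme]{corps-chirurgie} give boundedness directly; our contribution is just to note that Proposition~\ref{fin-ext} makes the geometric hypothesis suffice.
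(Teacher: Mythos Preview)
Your proposal is correct and follows essentially the same approach as the paper's proof: elimination of imaginaries turns the zero-dimensional interpretable set of degree $n$ extensions (Proposition~\ref{fin-ext}) into a zero-dimensional definable set, which is finite by Fact~\ref{fact:dim}(\ref{d2}). The paper compresses this into two sentences and does not spell out the dimension-preservation under the definable bijection as carefully as you do, but the argument is the same.
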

	\begin{proof}
		Zero-dimensional definable sets are finite.  If elimination of
		imaginaries holds, then zero-dimensional interpretable sets are
		finite.
	\end{proof}
Combining Propositions~\ref{card23} and \ref{fin-ext}, we have the following.
        \begin{corollary} \label{final}
		If $T$ is a geometric theory of infinite fields in a countable language, then there is an uncountable model $K \models T$ with countably many finite extensions of degree $n$ for each $n$.
	\end{corollary}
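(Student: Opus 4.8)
The plan is to combine the two preceding propositions directly. By Proposition~\ref{card23}(2), since the language is countable, there is a model $K \models T$ of cardinality $\aleph_1$ in which every zero-dimensional interpretable set is countable. (In fact for the statement as phrased we only need \emph{uncountable}; but taking $\aleph_1$ is harmless.) Now fix $n \in \mathbb{N}_{>0}$ and let $Y_n$ be the interpretable set parametrizing the degree-$n$ finite extensions of $K$. By Proposition~\ref{fin-ext}, $\dim(Y_n) = 0$, so by the choice of $K$ the set $Y_n$ is countable. Since this holds for every $n$, the model $K$ has only countably many finite extensions of degree $n$ for each $n$, as desired.

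The only point that needs a word of care is the transition between ``interpretable set of degree-$n$ extensions'' and ``set of degree-$n$ extensions'' — i.e.\ that counting the points of the interpretable set $Y_n$ genuinely counts the isomorphism-over-$K$ classes of degree-$n$ extensions (equivalently, since $K$ is perfect by Lemma~\ref{perfect}, the subfields of $K^{\mathrm{alg}}$ of degree $n$ over $K$). This is exactly the framework already set up in the proof of Proposition~\ref{fin-ext}, where $Y$ is defined as the interpretable set of degree-$n$ extensions via the quotient of the space of irreducible monic degree-$n$ polynomials by the interalgebraicity relation, so no new work is required; the bijection between points of $Y_n$ and degree-$n$ extensions is built into the construction.

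I do not anticipate a genuine obstacle here: the corollary is a formal consequence of results already in hand. If anything, the mild subtlety is making sure the notion of ``finite extension'' is unambiguous — we mean extensions inside a fixed algebraic closure (equivalently up to $K$-isomorphism), and since $K$ is perfect these coincide with the separable degree-$n$ extensions, which are precisely what the interpretable set $Y_n$ enumerates. I would phrase the proof in two sentences and leave it at that.

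\begin{proof}
By Proposition~\ref{card23}(2), there is a model $K \models T$ of cardinality $\aleph_1$ in which every zero-dimensional interpretable set is countable. By Proposition~\ref{fin-ext}, for each $n \in \mathbb{N}_{>0}$ the interpretable set of degree-$n$ finite extensions of $K$ has dimension zero, hence is countable. Thus $K$ is an uncountable model with only countably many finite extensions of degree $n$ for each $n$.
\end{proof}
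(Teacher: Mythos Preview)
Your proof is correct and follows exactly the approach of the paper, which simply states that the corollary follows by combining Propositions~\ref{card23} and~\ref{fin-ext}. Your added remark about the correspondence between points of $Y_n$ and actual degree-$n$ extensions is a reasonable sanity check but, as you note, requires no new work.
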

For example, since perfect PAC fields are geometric~\cite{CH-bounded}, Corollary~\ref{final} recovers results such as~\cite[Example 2.2]{full-hilb}. The fact that all very slim fields satisfy this property might have some field-theoretic consequences.

	\bibliographystyle{amsalpha}
	\bibliography{ref}

\end{document}